\newcommand{\fref}[1]{\prettyref{#1}}
\newcommand{\mynewthm}[3][]{%
  \def\PARAM{#1}
  \ifx\PARAM\empty
  \newtheorem{#2}[thmnum]{#3}
  \else
  \newtheorem{#2}{#3}[#1]
  \fi
  \newtheorem*{#2*}{#3}%
  \newrefformat{#2}{#3~\ref{##1}}%
}
\newcommand{\ThmLabel}{Theorem}
\newcommand{\PrpLabel}{Proposition}
\newcommand{\LemLabel}{Lemma}
\newcommand{\FctLabel}{Fact}
\newcommand{\CorLabel}{Corollary}
\newcommand{\DfnLabel}{Definition}
\newcommand{\ConvLabel}{Convention}
\newcommand{\NtnLabel}{Notation}
\newcommand{\CstLabel}{Construction}
\newcommand{\ExmLabel}{Example}
\newcommand{\RmkLabel}{Remark}
\newcommand{\QstLabel}{Question}
\newcommand{\ThmLabel}{\iflanguage{french}{Théorème}{Theorem}}
\newcommand{\PrpLabel}{Proposition}
\newcommand{\LemLabel}{\iflanguage{french}{Lemme}{Lemma}}
\newcommand{\FctLabel}{\iflanguage{french}{Fait}{Fact}}
\newcommand{\CorLabel}{\iflanguage{french}{Corollaire}{Corollary}}
\newcommand{\DfnLabel}{\iflanguage{french}{Définition}{Definition}}
\newcommand{\ConvLabel}{Convention}
\newcommand{\NtnLabel}{Notation}
\newcommand{\CstLabel}{Construction}
\newcommand{\ExmLabel}{\iflanguage{french}{Exemple}{Example}}
\newcommand{\RmkLabel}{\iflanguage{french}{Remarque}{Remark}}
\newcommand{\QstLabel}{Question}
\theoremstyle{plain}
\theoremstyle{definition}
\theoremstyle{remark}
\newcounter{cycprfcnt}
{\begin{list}{\PackageWarning{begnac}{Label required for cycprf}}%
  {%
    \setcounter{cycprfcnt}{1}
    \setlength{\itemindent}{0.5\leftmargin}%
    \setlength{\leftmargin}{0pt}%
  }%
}%
{\qedhere\end{list}}%
\def\indsym#1#2{%
  \setbox0=\hbox{$\m@th#1x$}%
  \kern\wd0%
  \hbox to 0pt{\hss$\m@th#1\mid$\hbox to 0pt{$\m@th#1^{#2}$\hss}\hss}%
  \lower.9\ht0\hbox to 0pt{\hss$\m@th#1\smile$\hss}%
  \kern\wd0}
\newcommand{\ind}[1][]{\mathop{\mathpalette\indsym{#1}}}
\def\nindsym#1#2{%
  \setbox0=\hbox{$\m@th#1x$}%
  \kern\wd0%
  \hbox to 0pt{\hss$\m@th#1\not$\kern1.4\wd0\hss}
  \hbox to 0pt{\hss$\m@th#1\mid$\hbox to 0pt{$\m@th#1^{#2}$\hss}\hss}%
  \lower.9\ht0\hbox to 0pt{\hss$\m@th#1\smile$\hss}%
  \kern\wd0}
\def\dotminussym#1#2{%
  \setbox0=\hbox{$\m@th#1-$}%
  \kern.5\wd0%
  \hbox to 0pt{\hss\hbox{$\m@th#1-$}\hss}%
  \raise.6\ht0\hbox to 0pt{\hss$\m@th#1.$\hss}%
  \kern.5\wd0}
\renewcommand{\emptyset}{\varnothing}
\renewcommand{\setminus}{\smallsetminus}
\def\models{\vDash}
\newcommand{\concat}{{^\frown}}
\newcommand{\rest}{{\restriction}}
\newcommand{\sfrac}[2]{\hbox{$\frac{#1}{#2}$}}
\newcommand{\half}[1][1]{\sfrac{#1}{2}}
\DeclareMathOperator{\tp}{tp}
\newcommand{\Cb}{\mathrm{Cb}}
\DeclareMathOperator{\tS}{S}
\DeclareMathOperator{\dcl}{dcl}
\DeclareMathOperator{\acl}{acl}
\DeclareMathOperator{\id}{id}
\DeclareMathOperator{\Aut}{Aut}
\DeclareMathOperator{\Pert}{Pert}
\newcommand{\fB}{\mathfrak{B}}
\newcommand{\cA}{\mathcal{A}}
\newcommand{\cB}{\mathcal{B}}
\newcommand{\cH}{\mathcal{H}}
\newcommand{\cL}{\mathcal{L}}
\newcommand{\cU}{\mathcal{U}}
\newcommand{\sA}{\mathscr{A}}
\newcommand{\sB}{\mathscr{B}}
\newcommand{\sC}{\mathscr{C}}
\newcommand{\sL}{\mathscr{L}}
\newcommand{\bC}{\mathbb{C}}
\newcommand{\bE}{\mathbb{E}}
\newcommand{\bN}{\mathbb{N}}
\newcommand{\bP}{\mathbb{P}}
\newcommand{\bQ}{\mathbb{Q}}
\newcommand{\bR}{\mathbb{R}}
\newcommand{\bZ}{\mathbb{Z}}
\begin{document}

\title[On perturbations of a generic automorphism]{On perturbations of
  Hilbert spaces and probability algebras with a generic automorphism}

\author{Itaï \textsc{Ben Yaacov}}
\address{Itaï \textsc{Ben Yaacov} \\
  Université Claude Bernard -- Lyon 1 \\
  Institut Camille Jordan \\
  43 boulevard du 11 novembre 1918 \\
  69622 Villeurbanne Cedex \\
  France}

\urladdr{\url{http://math.univ-lyon1.fr/~begnac}}

\author{Alexander Berenstein}
\address{Alexander Berenstein
  Universidad de los Andes \\
  Departamento de Matemáticas \\
  Carrera 1 \# 18A-10, Bogotá, Colombia ; and
  \newline\indent
  Université Claude Bernard -- Lyon 1 \\
  Institut Camille Jordan \\
  43 boulevard du 11 novembre 1918 \\
  69622 Villeurbanne Cedex \\
  France}
\email{\url{aberenst@uniandes.edu.co}}
\urladdr{\url{http://matematicas.uniandes.edu.co/~aberenst/}}

\thanks{Research supported by NSF grant DMS-0500172,
  by ANR chaire d'excellence junior THEMODMET (ANR-06-CEXC-007)
  and by Marie Curie research network ModNet}

\svnInfo $Id: GenAutPert.tex 922 2009-06-02 13:14:12Z begnac $
\thanks{\textit{Revision} {\svnInfoRevision} \textit{of} \today}

\begin{abstract}
  We prove that $IHS_A$, the theory of infinite dimensional
  Hilbert spaces equipped with a
  generic automorphism, is $\aleph_0$-stable up to perturbation of the
  automorphism, and admits prime models up to perturbation over any set.
  Similarly, $APr_A$, the theory of atomless probability algebras
  equipped with a generic automorphism is $\aleph_0$-stable up to
  perturbation.
  However, not allowing perturbation it is not even superstable.
\end{abstract}

\maketitle

\section*{Introduction}
It was proved by Chatzidakis and Pillay 
\cite{Chatzidakis-Pillay:GenericStructuresAndSimpleTheories} that if $T$ is a
first order superstable theory, and the theory
$T_\tau = T \cup \{\tau \text{ is an automorphism}\}$
has a model companion $T_A$, then $T_A$ is supersimple.
Throughout this paper we refer to $T_A$ (when it exists) as the theory
of models of $T$ equipped with a \emph{generic} automorphism.

Continuous first order logic is an extension of first order logic,
introduced in \cite{BenYaacov-Usvyatsov:CFO} as a formalism for a
model theoretic treatment of metric structures
(see also \cite{BenYaacov-Berenstein-Henson-Usvyatsov:NewtonMS} for a
general exposition of the model theory of metric structures).
It is a natural question to ask whether the theorem of
Chatzidakis and Pillay generalises to continuous logic and metric
structures.

The proof of Chatzidakis and Pillay would hold in metric
structures if we used the classical definitions of superstability and
supersimplicity literally (namely, types do not fork over finite
sets).
These definitions, however, are known to be too strong in metric
structures, and need to be weakened somewhat in order to make sense.
For example, the theory of Hilbert spaces has a countable language, is
totally categorical and does not satisfy the classical definition of
superstability.

The standard definition for $\aleph_0$-stability and superstability
for metric structures
(\cite{Iovino:StableBanach}, and later \cite{BenYaacov:Morley})
comes from measuring the size of a type space
not by its cardinality but by its density character in the metric
induced on it from the structures.
A continuous theory is supersimple if for
every $\varepsilon > 0$, the $\varepsilon$-neighbourhood of a type
does not fork over a finite set of parameters,
or equivalently, if ordinal Lascar ranks
corresponding to  ``$\varepsilon$-dividing'' exist.
A theory is superstable if and only if it is stable and supersimple.
Similarly, $\aleph_0$-stability is equivalent to the existence of ordinal
$\varepsilon$-Morley ranks, which may be defined via a metric variant of the
classical Cantor-Bendixson ranks
(see \cite{BenYaacov:TopometricSpacesAndPerturbations}
for a general study of such ranks).

With these corrected definitions,
the class of $\aleph_0$-stable theories is rich with
examples: Hilbert spaces, probability algebras, $L^p$ Banach lattices
and so on.
Furthermore, many classical results can be generalised.
For example, an $\aleph_0$-stable theory has prime models
over every set, an uncountably categorical theory in a countable
language is $\aleph_0$-stable, and so on (see
\cite{BenYaacov:Morley}).
A somewhat more involved preservation result was shown by the first
author \cite{BenYaacov:SuperSimpleLovelyPairs}, namely that the theory
of lovely pairs of models of a supersimple (respectively, superstable)
theory is again supersimple (respectively, superstable).

With superstability and supersimplicity defined as above,
the question whether a superstable theory with a generic automorphism
is supersimple arises again.
A specific instance of this question was asked by the second author
and C.\ Ward Henson
\cite{Berenstein-Henson:ProbabilityWithAutomorphism} regarding the
theory of probability algebras equipped with a generic automorphism.
It was answered negatively by the first author, showing that
probability algebras with a generic automorphism are not
superstable.
The proof appears in \fref{sec:ProbAutNotSuperstable}.

However, the notions of $\aleph_0$-stability and/or superstability mentioned
above might still be too strong: while they consider types of tuples
up to arbitrarily small distance, one may further relax this and
consider types also up to arbitrarily small perturbations of the
entire language, or parts thereof.
This idea can be formalised with the theory of perturbations as
developed in \cite{BenYaacov:Perturbations} and somewhat restated
in \cite[Section~4]{BenYaacov:TopometricSpacesAndPerturbations}.
We shall assume some familiarity with the second reference.

The goal of this paper is to study carefully two examples: Hilbert
spaces and probability algebras, both equipped with a generic
automorphism.
The theory $APr$ of atomless probability algebras
and the theory $IHS$ of infinite dimensional Hilbert spaces have some
features in common.
They are $\aleph_0$-stable, separably categorical over any finite set
of parameters and types over sets are stationary.
It follows (see \cite{Berenstein-Henson:ProbabilityWithAutomorphism})
that both $IHS_\tau$ and $APr_\tau$ admit model companions
$IHS_A$ and $APr_A$.

In \fref{sec:Hilbert} we deal with the theory $IHS_A$ of Hilbert
spaces equipped with a generic automorphism.
We recall some if its properties from
\cite{BenYaacov-Usvyatsov-Zadka:HilbertWithAutomorphism}.
We use a Corollary of the Weyl-von Neumann-Berg Theorem to show that
$IHS_A$ is $\aleph_0$-stable up to perturbation (of the automorphism),
and admits prime models up to perturbation over any set.
Unlike the arguments in
\cite{BenYaacov-Usvyatsov-Zadka:HilbertWithAutomorphism},
our arguments can be extended to a generic action of a finitely
generated group of automorphisms (i.e., a generic unitary
representation, see \cite{Berenstein:HilbertWithAutomorphismGroups})
and even to Hilbert spaces equipped with a
generic action of a fixed finitely generated $C^*$-algebra.
This section also serves as a soft analogue for the main results of the
other sections.

In \fref{sec:Probability} we deal with the theory $APr_A$ of
probability algebras with a generic automorphism, first studied in
\cite{Berenstein-Henson:ProbabilityWithAutomorphism}.
Specifically, we show that $APr_A$ is $\aleph_0$-stable
up to perturbations of the automorphism.
It is an open question if $APr_A$ admits prime models up to
perturbations.

In \fref{sec:ProbAutNotSuperstable} we conclude with
the first author's proof that without perturbation the theory $APr_A$
is not superstable, showing that the results of
\fref{sec:Probability} are in some sense optimal.

\section{Hilbert spaces with an automorphism}
\label{sec:Hilbert}

Let us consider a Hilbert space $\cH$ and let
$B(\cH)$ denote the space of bounded linear operators on $\cH$.
We recall that the \emph{operator norm} of $T \in B(\cH)$ is
$\|T\|=\sup_{\|x\|=1}\|T(x)\|$.
We also recall the notions of the \emph{spectrum},
\emph{punctual spectrum} and \emph{essential spectrum} of
an operator $T \in B(\cH)$:
\begin{align*}
  \sigma(T) & =
  \{\lambda \in \bC\colon T-\lambda I \text{ is not invertible}\}, \\
  \sigma_p(T) & =
  \{\lambda \in \bC\colon \ker(T-\lambda I) \neq 0\}, \\
  \sigma_e(T) & =
  \{\text{non isolated points of } \sigma(T)\} \cup
  \{\lambda \in \bC\colon \dim \ker (T-\lambda I) = \infty\}.
\end{align*}

\begin{dfn}
  Let $\cH$ be a Hilbert space, $T_0,T_1 \in B(\cH)$.
  We say that $T_0$ and $T_1$ are
  \emph{approximately unitarily equivalent} if there is a sequence of
  unitary operators $\{U_n\}_{n \in \bN}$ such that
  $\|T_0-U_nT_1U_n^*\| \to 0$.
\end{dfn}

\begin{fct}[Weyl-von Neumann-Berg Theorem {\cite[p.\ 60]{Davidson:CsAlgebrasByExample}}]
  \label{fct:WvNB}
  Let $\cH$ be a Hilbert space and let $T_0,T_1 \in B(\cH)$ be normal
  operators.
  Then $T_0$ and $T_1$ are approximately unitarily equivalent if and only if
  \begin{enumerate}
  \item $\sigma_e(T_0)=\sigma_e(T_1)$
  \item $\dim \ker(T_0-\lambda I)=\dim \ker(T_1-\lambda I)$
    for all $\lambda$ in $\mathbb{C}\setminus \sigma_e(T_0)$.
  \end{enumerate}
\end{fct}

When considering a Hilbert space as a continuous structure we shall
replace it with its unit ball, as described in
\cite{BenYaacov:NakanoSpaces}.
We shall use the language
$\cL = \{0,-,\dot 2,\half[x+y]\}$,
where $\dot 2x = \min(2,\frac{1}{\|x\|})x$
and $d(x,y) = \| \half[x-y] \|$.
Notice that we can recover the norm as
$\|x\| = d(x,-x)$.
An axiomatisation for the class of (unit balls of) Banach spaces in
this language, excluding the symbol $\dot 2$,
appears in \cite{BenYaacov:NakanoSpaces}.
The symbol $\dot 2$ serves as a Skolem function for the fullness axiom
there, yielding a universal theory.
A Banach space is a Hilbert space if and only if the parallelogram
identity holds, which is a universal condition as well:
\begin{gather*}
  \|x+y\|^2+\|x-y\|^2=2\|x\|^2+2\|y\|^2
\end{gather*}
We obtain that the class of Hilbert spaces is elementary, admitting a
universal theory $HS$.
Its model companion is $IHS$, the theory of infinite dimensional
Hilbert spaces, obtained by adding the appropriate scheme of
existential conditions.
It is easy to check that the theory $HS$ has the amalgamation
property, so $IHS$ eliminates quantifiers (i.e., it is the model
completion of $HS$).

Now let $\tau$ be a new unary function symbol and
let $\cL_\tau$ be $\cL\cup\{\tau\}$.
Let $IHS_\tau$ be the theory
$IHS \cup \{\tau \text{ is an automorphism}\}$.
Since $IHS$ is $\aleph_0$-stable and separably categorical even
after naming finitely many constants,
the theory $IHS_\tau$ admits a model companion $IHS_A$
(see \cite{Berenstein-Henson:ProbabilityWithAutomorphism}).
The universal part of $IHS_\tau$ is
$(IHS_\tau)^\forall = (HS_\tau)^\forall
= HS \cup \{\tau \text{ is a linear and isometric}\}$.
It is again relatively easy to check that
$(HS_\tau)^\forall$ has the amalgamation property.
Indeed,
if $(\cH_0,\tau_0) \subseteq (\cH_i,\tau_i)$ for $i=1,2$
then we may write
$(\cH_i,\tau_i) = (\cH_0,\tau_0) \oplus (\cH_i',\tau_i')$,
where $\oplus$ is the orthogonal direct sum, and then
$(\cH_0,\tau_0) \oplus (\cH_1',\tau_1') \oplus (\cH_2',\tau_2')$
will do.
It thus follows that $IHS_A$ eliminates quantifiers as well.

\begin{prp}[Ben Yaacov, Usvyatsov, Zadka
  {\cite{BenYaacov-Usvyatsov-Zadka:HilbertWithAutomorphism}}]
  \label{prp:HilbertAutSpectrum}
  Let $\cH$ be a separable Hilbert space and let $\tau$ be a unitary
  operator on $\cH$.
  Then $(\cH,\tau) \models IHS_A$
  (i.e., $(\cH,\tau)$ is existentially closed as a model of $IHS_\tau$)
  if and only if $\sigma(\tau)=S^1$.
\end{prp}
\begin{proof}
  Clearly, if $(\cH,\tau)$ is existentially closed, then
  $\sigma(\tau)=S^1$.
  On the other hand, assume that $(\cH,\tau) \models IHS_\tau$ and that
  $\sigma(\tau)=S^1$.
  Passing to an elementary substructure, we may assume that $\cH$ is
  separable.
  Now let $(\cH_0,\tau_0)$ be separable and existentially closed.
  Since $IHS$ is separably categorical, we may assume that $\cH_0=\cH$.
  Since $\sigma(\tau_0)=\sigma(\tau)=S^1$, by \fref{fct:WvNB} there is
  a sequence $\{U_n\}_{n\in \omega}$ of unitary operators on $\cH$
  such that $U_n\tau_1U_n^* \to \tau_0$ in norm.
  It follows that if $\cU$ is a non-principal ultra-filter on $\bN$
  then $\Pi_\cU (\cH,U_n\tau  U_n^*) = \Pi_\cU (\cH,\tau_0)$.

  On the other hand, $(\cH,U_n\tau U_n^*)\cong (\cH, \tau)$
  for all $n \in \bN$.
  Thus
  $\Pi_{\mathcal{U}}(\cH,\tau)\cong \Pi_{\mathcal{U}}(\cH,\tau_0)$,
  whereby $(\cH,\tau) \equiv (\cH,\tau_0) \models IHS_A$.
\end{proof}

\begin{rmk}
  Henson and Iovino observed that the
  theory $IHS_A$ is not $\aleph_0$-stable (or even small) in the
  sense defined in the introduction.
  Indeed, let $(\cH,\tau) \models IHS_A$ be $\aleph_1$-saturated and for
  each $\lambda \in S^1$ let $v_\lambda\in H$ be a normal vector such
  that $\tau v_\lambda = \lambda v_\lambda$.
  Then $d\bigl(\tp(v_\lambda),\tp(v_\rho)\bigr) = \sqrt{2}$ for
  $\lambda \neq \rho$.
  Thus the metric density character of $\tS_1(\emptyset)$ is the
  continuum.

  On the other hand, it is shown in
  \cite{BenYaacov-Usvyatsov-Zadka:HilbertWithAutomorphism} that $IHS_A$
  is superstable.
\end{rmk}

Let $\dcl_\tau$ and $\acl_\tau$ denote the definable and algebraic
closure (in the real sort) in models of $IHS_A$.
We claim that if  $(\cH,\tau)\models IHS_A$ and $A\subseteq H$, then
$\dcl_\tau(A) = \acl_\tau(A)
= \dcl\left( \bigcup_{n \in \bZ}\tau^n(A) \right)$,
where $\dcl(A)$ is the definable closure of $A$ in the language $\cL$.
Indeed, let
$B = \dcl\left( \bigcup_{n \in \bZ}\tau^n(A) \right)$.
Then clearly $B \subseteq \dcl_\tau(A)$.
On the other hand, we may decompose
$(\cH,\tau) = (B,\tau\rest_B) \oplus (B',\tau\rest_{B'})$,
in which case
$(\cH,\tau) \preceq
(B,\tau\rest_B) \oplus \bigoplus_{n\in\bN} (B',\tau\rest_{B'})$,
showing that $\acl_\tau(A) \subseteq B$.

We may similarly characterise non forking in models of $IHS_A$.
For $(\cH,\tau) \models IHS_A$ and subsets $A,B,C\subseteq \cH$,
say that  $A\ind_B C$ if $P_{\dcl_\tau(B)}(a)=P_{\dcl_\tau(BC)}(a)$
for every $a\in A$.
We leave it to the reader to check that
$\ind$ satisfies the usual axioms of a stable notion of independence
(invariance, symmetry, transitivity, and so on), and therefore
coincides with non forking.

\begin{prp}
  Let $(\cH,\tau)\models IHS_A$, $A, B \subseteq  \cH$.
  Then $\tp(A/B)$ is stationary and $\Cb(A/B)$ is inter-definable
  with the set $C = \{P_{\dcl_\tau(B)}(a)\})_{a\in A}$.
\end{prp}
\begin{proof}
  Stationarity follows from the characterisation of independence (and
  from quantifier elimination).
  It is also clear that $C \subseteq \dcl_\tau(B)$ and
  $A \ind_C B$, and since we already know that $\tp(A/C)$ is
  stationary as well, we obtain $\Cb(A/B) \subseteq C$.

  For the converse it suffices to show that for every $a \in A$,
  the projection
  $P_{\dcl_\tau(B)}(a)$
  belongs to the definable closure of any Morley sequence in
  $\tp(A/B)$.
  So let $(A_n)_{n\in \bN}$ be such a Morley sequence.
  Then $P_{\dcl_\tau(B)}(a_n) = P_{\dcl_\tau(B)}(a)$
  for all $a \in A$  and all $n$, so
  $\{a_n-P_{\dcl_\tau(B)}(a)\}_{n \in \bN}$
  forms an orthogonal sequence of bounded norm.
  Thus
  \begin{gather*}
    \sum_{n = 0}^{m-1} \frac{a_n}{m}
    =
    P_{\dcl_\tau(B)}(a)
    + \sum_{n=0}^{m-1}\frac{a_n-P_{\dcl_\tau(B)}(a)}{m} \to
    P_{\dcl_\tau(B)}(a).
    \qedhere
  \end{gather*}
\end{proof}

It follows that $IHS_A$ has weak elimination of imaginaries, namely that
for every imaginary element $e$ there exists a real tuple
(possibly infinite) $A$ such that
$A \subseteq \acl_\tau(e)$,
$e \in \dcl^{eq}(A)$.

We now turn to perturbations of the automorphism in models of $IHS_A$.
Let $(\cH_i,\tau_i) \models IHS_A$ for $i = 0,1$, and let $r \geq 0$.
We define an \emph{$r$-perturbation} of $(\cH_0,\tau_0)$
to $(\cH_1,\tau_1)$ to be an isometric isomorphism of Hilbert spaces
$U\colon \cH_0 \cong \cH_1$ which satisfies in addition
\begin{gather*}
  \| U \tau_0 U^{-1} - \tau_1 \| \leq r.
\end{gather*}
The set of all $r$-perturbations will be denoted
$\Pert_r\bigl( (\cH_0,\tau_0), (\cH_1,\tau_1) \bigr)$.
It is fairly immediate to verify that this indeed satisfies all the
conditions stated in
\cite[Theorem~4.4]{BenYaacov:TopometricSpacesAndPerturbations},
and therefore does indeed correspond to a perturbation system as
defined there.

\begin{lem}
  \label{lem:SpectralInclusion}
  Let $(\cH_0,\tau_0) \subseteq (\cH_i,\tau_i)$ be separable models of
  $IHS_\tau$ for $i = 1,2$.
  Then we may write
  $(\cH_i,\tau_i) = (\cH_0,\tau_0) \oplus (\cH_i',\tau_i')$,
  and let us assume that $\sigma(\tau_1') \subseteq \sigma(\tau_2')$
  and that $\sigma(\tau_1')$ has no isolated points.

  Then for every $\varepsilon > 0$ there is an isometric isomorphism
  $U\colon \cH_1 \oplus \cH_2' \cong \cH_2$, which fixes $\cH_0$
  such that
  $\| U(\tau_1\oplus\tau_2')U^{-1} - \tau_2 \| \leq \varepsilon$.
\end{lem}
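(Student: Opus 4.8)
The plan is to reduce the statement to an approximate unitary equivalence between the operators $\tau_1' \oplus \tau_2'$ and $\tau_2'$ on the orthogonal complements of $\cH_0$, and then to apply the Weyl-von Neumann-Berg Theorem (\fref{fct:WvNB}). First I would observe that any isometric isomorphism $U \colon \cH_1 \oplus \cH_2' \cong \cH_2$ fixing $\cH_0$ must carry the orthogonal complement $\cH_1' \oplus \cH_2'$ of $\cH_0$ in the domain onto the orthogonal complement $\cH_2'$ of $\cH_0$ in $\cH_2$; that is, $U = \id_{\cH_0} \oplus U'$ for an isometric isomorphism $U' \colon \cH_1' \oplus \cH_2' \cong \cH_2'$. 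Under the decomposition $\cH_1 \oplus \cH_2' = \cH_0 \oplus (\cH_1' \oplus \cH_2')$ we have $\tau_1 \oplus \tau_2' = \tau_0 \oplus (\tau_1' \oplus \tau_2')$, and since $U$ acts as the identity on the $\cH_0$-summand while $\tau_2 = \tau_0 \oplus \tau_2'$, the difference $U(\tau_1 \oplus \tau_2')U^{-1} - \tau_2$ vanishes on $\cH_0$ and equals $U'(\tau_1' \oplus \tau_2')(U')^{-1} - \tau_2'$ on $\cH_2'$. Hence $\|U(\tau_1\oplus\tau_2')U^{-1} - \tau_2\| = \|U'(\tau_1' \oplus \tau_2')(U')^{-1} - \tau_2'\|$, and it suffices, for each $\varepsilon > 0$, to find a unitary $U'$ making the right-hand side at most $\varepsilon$. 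This is precisely the assertion that the normal (indeed unitary) operators $\tau_1' \oplus \tau_2'$ and $\tau_2'$ are approximately unitarily equivalent.

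To invoke \fref{fct:WvNB} I would verify its two conditions, and the only place the hypotheses enter is the observation that $\sigma(\tau_1') \subseteq \sigma_e(\tau_2')$: since $\sigma(\tau_1')$ has no isolated points, each of its points is a limit of points of $\sigma(\tau_1') \subseteq \sigma(\tau_2')$ and so is a non-isolated point of $\sigma(\tau_2')$, hence lies in $\sigma_e(\tau_2')$. Using the direct-sum identities $\sigma(\tau_1'\oplus\tau_2') = \sigma(\tau_1')\cup\sigma(\tau_2') = \sigma(\tau_2')$ and $\ker((\tau_1'\oplus\tau_2') - \lambda I) = \ker(\tau_1'-\lambda I)\oplus\ker(\tau_2'-\lambda I)$, condition (i) follows because the non-isolated points of $\sigma(\tau_1'\oplus\tau_2')$ coincide with those of $\sigma(\tau_2')$, while any $\lambda$ with $\dim\ker(\tau_1'-\lambda I)=\infty$ already lies in $\sigma(\tau_1')\subseteq\sigma_e(\tau_2')$. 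For condition (ii), if $\lambda \notin \sigma_e(\tau_2')$ then in particular $\lambda \notin \sigma(\tau_1')$, so $\ker(\tau_1'-\lambda I) = 0$ and therefore $\dim\ker((\tau_1'\oplus\tau_2')-\lambda I) = \dim\ker(\tau_2'-\lambda I)$.

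Finally, having identified the separable spaces $\cH_1'\oplus\cH_2'$ and $\cH_2'$ (the case $\sigma(\tau_1')=\emptyset$ forces $\cH_1'=0$, where one takes $U'=\id$; otherwise $\sigma(\tau_1')$ is infinite, so both spaces are separable and infinite-dimensional), \fref{fct:WvNB} supplies unitaries $V_n$ with $\|V_n(\tau_1'\oplus\tau_2')V_n^* - \tau_2'\| \to 0$; choosing $n$ large and setting $U = \id_{\cH_0}\oplus V_n$ completes the argument. I expect the only real subtlety to be the spectral bookkeeping of the second paragraph, and specifically the use of the no-isolated-points hypothesis to force $\sigma(\tau_1')\subseteq\sigma_e(\tau_2')$: this is exactly what prevents $\tau_1'$ from contributing eigenvalues or isolated essential spectrum that are not already present for $\tau_2'$, and hence what makes both conditions of \fref{fct:WvNB} hold.
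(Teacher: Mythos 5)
Your proof is correct and takes essentially the same route as the paper's: both reduce the problem to an approximate unitary equivalence between $\tau_1'\oplus\tau_2'$ and $\tau_2'$ on the orthogonal complements of $\cH_0$, verify the hypotheses of the Weyl--von Neumann--Berg theorem using the no-isolated-points assumption, and then take $U = \id_{\cH_0}\oplus V$. Your more explicit spectral bookkeeping (deriving $\sigma(\tau_1')\subseteq\sigma_e(\tau_2')$ and checking both conditions, plus the degenerate case $\cH_1'=0$) is just a careful spelling-out of the paper's terser verification, which argues directly that isolated points of the common spectrum have their eigenspaces inside $\cH_2'$.
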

\begin{proof}
  Under the assumptions we have
  $\sigma(\tau_1'\oplus\tau_2') = \sigma(\tau_2')$.
  We also assume that $\sigma(\tau_1')$ has no isolated points.
  Therefore, if $\lambda \in \sigma(\tau_1'\oplus\tau_2')$ is isolated
  then its eigenspace in $\cH_1' \oplus \cH_2'$ is  entirely
  contained in $\cH_2'$, so the multiplicity (possibly infinite)
  of $\lambda$ is the same for
  $\tau_1'\oplus\tau_2'$ and for $\tau_2'$.
  It follows that the hypotheses of \fref{fct:WvNB} hold,
  and we obtain
  $V\colon \cH_1' \oplus \cH_2' \cong \cH_2'$
  such that 
  $\| V(\tau_1'\oplus\tau_2')V^{-1} - \tau_2' \| \leq \varepsilon$.
  Then $U = \id_{\cH_0} \oplus V$ will do.
\end{proof}

\begin{thm}
  \label{thm:HilbertAutAleph0Stable}
  The theory $IHS_A$ is $\aleph_0$-stable up to perturbation of the
  automorphism.
\end{thm}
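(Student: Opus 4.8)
The plan is to show that for every countable parameter set $A$ in a monster model $(\cH,\tau)\models IHS_A$, the type space $\tS_n(A)$ has countable density character with respect to the perturbation metric induced by $\Pert$; this is exactly what $\aleph_0$-stability up to perturbation asserts. First I would reduce to free types over $\emptyset$. Decompose $\cH = \dcl_\tau(A) \oplus \dcl_\tau(A)^\perp$ and, for a tuple $\bar a = (a_1,\dots,a_n)$, write $a_i = c_i + a_i'$ with $c_i = P_{\dcl_\tau(A)}(a_i)$ and $a_i' \in \dcl_\tau(A)^\perp$. Since $\dcl_\tau(A)$ is $\tau$-invariant and $\tau$ is unitary, one checks $\tau^k a_i' \perp \dcl_\tau(A)$ for all $k$, so each value $\langle \tau^k a_i, b\rangle$ with $b \in \dcl_\tau(A)$ depends only on $\bar c$, while $\langle \tau^k a_i, \tau^l a_j\rangle$ splits into a contribution from $\bar c$ and one from the free part $\bar a'$. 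By quantifier elimination, together with the characterisation of $\Cb$ and stationarity from the preceding proposition, $\tp(\bar a/A)$ is therefore determined by the pair $(\bar c,\tp(\bar a'/\emptyset))$. The first coordinate ranges over the separable Hilbert space $\dcl_\tau(A)$ and so contributes only countable density character already in the ordinary metric; it thus suffices to bound the perturbation density character of the space of free types $\tp(\bar a'/\emptyset)$.

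For the free part I would use the automorphism-perturbation to discretise the spectrum. Fix a countable dense $D \subseteq S^1$ and $\varepsilon > 0$, let $\cH' \subseteq \dcl_\tau(A)^\perp$ be the $\tau$-closure of $\bar a'$ and $\tau' = \tau\rest_{\cH'}$. Rounding to a nearest point of a finite $\varepsilon$-net $F \subseteq D$ via a map $g\colon S^1 \to F$, the functional calculus yields a unitary $\tau'' = g(\tau')$ on $\cH'$ with finite spectrum $F$ and $\|\tau' - \tau''\| = \sup_{z\in\sigma(\tau')}|z - g(z)| \le \varepsilon$. Replacing $\tau$ by the operator equal to $\tau''$ on $\cH'$ and to $\tau$ elsewhere defines an $\varepsilon$-perturbation fixing $A$ (the identity lies in $\Pert_\varepsilon$), and crucially $\dcl_\tau(A)^\perp \ominus \cH'$ still carries full spectrum $S^1$, so the perturbed structure remains a model of $IHS_A$. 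After this perturbation $\cH'$ splits into finitely many eigenspaces and $\tp(\bar a'/\emptyset)$ is determined by the finitely many Hermitian inner products $\langle P_\lambda a_i', P_\lambda a_j'\rangle$ for $\lambda \in F$. These range over a bounded finite-dimensional set, hence form a separable family of types; letting $F$ range over the countably many finite subsets of $D$ yields a separable set $\Sigma_\varepsilon$ that is $\varepsilon$-dense among free types for the perturbation metric. Taking $\varepsilon = 1/k$ and unioning over $k$ produces a perturbation-dense separable set, which establishes the claim.

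I expect the main obstacle to be this discretisation step: one must arrange the $\varepsilon$-perturbation of $\tau$ so that it simultaneously fixes the parameters, i.e.\ acts only on the free summand $\cH'$, and keeps the global spectrum equal to $S^1$ so that one stays inside $IHS_A$ and the rounded structure genuinely represents a type over $A$. The orthogonal decomposition $\cH = \dcl_\tau(A)\oplus\dcl_\tau(A)^\perp$, together with the genericity (full spectrum, infinite multiplicity) of the complement $\dcl_\tau(A)^\perp \ominus \cH'$ — the same mechanism underlying \fref{lem:SpectralInclusion} and \fref{fct:WvNB} — is what reconciles these requirements. Verifying that $\Pert$ behaves well under this summand-wise modification, and that the finite spectral data is indeed a complete invariant of the free type, are the points requiring the most care.
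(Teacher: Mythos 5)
Your argument is correct, but it takes a genuinely different route from the paper's. The paper works model-by-model rather than type-by-type: given a separable $(\cH_0,\tau_0)\models IHS_A$, it forms the single separable extension $(\cH_1,\tau_1)=(\cH_0,\tau_0)\oplus(\cH_1',\tau_1')$ and uses \fref{lem:SpectralInclusion} --- a consequence of the Weyl--von Neumann--Berg theorem (\fref{fct:WvNB}) --- to map any separable extension $(\cH_2,\tau_2)=(\cH_0,\tau_0)\oplus(\cH_2',\tau_2')$ realizing a given type onto $\cH_1\oplus\cH_2'$ by a unitary fixing $\cH_0$ and conjugating $\tau_2$ to within $\varepsilon$ of $\tau_1\oplus\tau_2'$; hence every type over $\cH_0$ is realized in the fixed separable extension $(\cH_1,\tau_1)$ up to arbitrarily small perturbation. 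You instead perturb the automorphism directly: Borel functional calculus rounds $\tau$ on the $\tau$-invariant subspace $\cH'$ generated by the free part of the tuple to a unitary with finite spectrum $F$, after which the type is coded by the finite data $\bigl(\langle P_\lambda a_i',P_\lambda a_j'\rangle\bigr)_{\lambda\in F}$. This avoids \fref{fct:WvNB} altogether (only the spectral theorem is used) and produces something the paper's softer argument does not: an explicit, concretely parametrized perturbation-dense family of types. What the paper's approach buys in exchange is generality: replacing \fref{fct:WvNB} by Voiculescu's theorem, the same proof extends to generic actions of finitely generated groups and of $C^*$-algebras (as remarked after the theorem), whereas your discretisation is inherently tied to a single normal operator --- one cannot simultaneously round the spectra of several non-commuting unitaries. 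In a full write-up you would still need to verify three points you flagged only implicitly: that in the monster model the restriction of $\tau$ to $(\dcl_\tau(A)\oplus\cH')^\perp$ has full spectrum (true: for each $\lambda\in S^1$ saturation yields uncountably many orthonormal approximate $\lambda$-eigenvectors, of which all but countably many are orthogonal to the separable subspace $\dcl_\tau(A)\oplus\cH'$), so that the perturbed structure is again a model of $IHS_A$ by \fref{prp:HilbertAutSpectrum} and the identity map is a legitimate $\varepsilon$-perturbation fixing $A$ pointwise; that the map from spectral data to types is continuous (e.g.\ via square roots of Gram matrices), so each family with fixed $F$ is separable already in the ordinary metric; and that the pairing $(\bar c,\tp(\bar a'/\emptyset))\mapsto\tp(\bar a/A)$ interacts correctly with both metrics. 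All three checks go through, so the proposal is sound.
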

\begin{proof}
  Let $(\cH_0,\tau_0), (\cH_1',\tau_1')\models IHS_A$ be separable, and
  let
  $(\cH_1,\tau_1) = (\cH_0,\tau_0) \oplus (\cH_1',\tau_1')$.
  By \fref{prp:HilbertAutSpectrum} we have
  $(\cH_1,\tau_1) \models IHS_A$,
  so $(\cH_0,\tau_0) \preceq (\cH_1,\tau_1)$
  by model completeness.
  It will therefore be enough to show that every type
  over $\cH_0$ is realised, up to perturbation, in
  $(\cH_1,\tau_1)$.
  Such a type can always be realised in a separable elementary
  extension $(\cH_2,\tau_2) \succeq (\cH_1,\tau_1)$.
  Then $(\cH_0,\tau_0) \subseteq (\cH_2,\tau_2)$
  and we may decompose the latter as
  $(\cH_2,\tau_2) = (\cH_0,\tau_0) \oplus (\cH_2',\tau_2')$.

  Notice that $(\cH_1',\tau_1') \subseteq (\cH_2',\tau_2')$, so
  $\sigma(\tau_1')=\sigma(\tau_2')=S^1$.
  We may therefore apply \fref{lem:SpectralInclusion},
  obtaining for every $\varepsilon>0$ there an isometric
  isomorphism $U_\varepsilon\colon \cH_1\rightarrow \cH_2$
  fixing $\cH_0$ such that
  $\|\tau_1 - U_\varepsilon^{-1}\tau_2 U_\varepsilon\|
  < \varepsilon$.

  We have thus shown that every type over $\cH_0$ is realised, up to
  arbitrarily small perturbation of the automorphism, in a fixed
  separable extension $(\cH_1,\tau_1) \succeq (\cH_0,\tau_0)$,
  as desired.
\end{proof}

\begin{rmk}
  Let $G$ be a finitely generated discrete group and let $IHS_{gG}$
  be the theory of Hilbert spaces with a generic action of $G$ by
  automorphism (see \cite{Berenstein:HilbertWithAutomorphismGroups}).
  Using Voiculescu's Theorem \cite{Davidson:CsAlgebrasByExample}
  in place of \fref{fct:WvNB},
  the same argument shows that the theory $IHS_{gG}$ is
  $\aleph_0$-stable up to perturbations of the automorphisms.
  This can even be further extended to the theory $IHS_{g\cA}$ of
  a generic presentation of a finitely generated $C^*$-algebra $\cA$.
\end{rmk}

\begin{prp}
  The theory $IHS_A$ has prime models up to perturbation
  over sets (of real or imaginary elements).

  By this we mean that for every set $A$ in a model of $IHS_A$ there
  exists a model $(\cH_1,\tau_1)$, containing $A$,
  such that if $(\cH_2,\tau_2)$ is any other model which contains $A$
  then, up to arbitrarily small perturbation of $\tau_2$ to $\rho_2$,
  we can embed $(\cH_1,\tau_1)$ elementarily in
  $(\cH_2,\rho_2)$ over $A$.
\end{prp}
\begin{proof}
  We may assume that the set $A$ over which we seek a prime model is
  algebraically closed.
  By weak elimination of imaginaries we may assume that $A$ is a real
  set, and we may further assume that $A = \dcl_\tau(A)$.
  It is therefore a Hilbert subspace $\cH_0$ (possibly finite
  dimensional) on which $\tau_0 = \tau\rest_{\cH_0}$ is an
  automorphism.
  Moreover, since $IHS_A$ eliminates quantifiers, the type of
  $\cH_0$ is determined by the pair $(\cH_0,\tau_0)$, and there is no
  need to consider the ambient structure.

  If $(H_0,\tau_0)\models IHS_A$ there is nothing to prove.
  Otherwise $\sigma(\tau_0) \subsetneq S^1$.
  Let $(H_1',\tau_1')\models IHS_\tau$ be separable such that
  $\sigma(\tau_1') = \overline{S^1 \setminus \sigma(\tau_0)}$
  (for example we may take $H_1' = L_2(S^1 \setminus \sigma(\tau_0))$
  in the Lebesgue measure, with
  $(\tau_1'f)(x) = xf(x)$).
  Let
  $(\cH_1,\tau_1) = (\cH_0,\tau_0) \oplus (\cH_1',\tau_1')$.
  Clearly $\sigma(\tau_1) = S^1$, so
  $(\cH_1,\tau_1) \models IHS_A$,
  and we shall prove that it is prime, up to perturbation of the
  automorphism, over $\cH_0$.

  So let $(\cH_0,\tau_0) \subseteq (\cH_2,\tau_2) \models IHS_A$ and we
  may assume that $\cH_2$ is separable.
  As usual, we may decompose it as
  $(\cH_2,\tau_2) = (\cH_0,\tau_0) \oplus (\cH_2',\tau_2')$.
  Since $\sigma(\tau_2) = S^1$, we necessarily have
  $\sigma(\tau_2') \supseteq S^1 \setminus \sigma(\tau_0)$,
  and since $\sigma(\tau_2')$ is moreover closed, it contains
  $\sigma(\tau_1)$.
  By \fref{lem:SpectralInclusion}, for every $\varepsilon > 0$ there
  exists an isometric isomorphism
  $U\colon \cH_1 \oplus \cH_2' \cong \cH_2$ fixing $\cH_0$
  such that
  $\| U(\tau_1\oplus\tau_2')U^{-1} - \tau_2 \| \leq \varepsilon$.
  By \fref{prp:HilbertAutSpectrum} we also have
  $(\cH_1,\tau_1) \preceq (\cH_1,\tau_1) \oplus (\cH_2',\tau_2')$

  Thus $\rho_2 = U(\tau_1\oplus\tau_2')U^{-1}$ is as desired.
\end{proof}

\section{Probability algebras with an automorphism}
\label{sec:Probability}

By a probability space we mean a triplet $(X,\cB,\mu)$,
where $X$ is a set, $\cB$ a $\sigma$-algebra of subsets of $X$, $\mu$ a $\sigma$-additive
positive measure on $\cB$ such that $\mu(X)=1$.
A probability space  $(X,\cB,\mu)$ is
called \emph{atomless} if for every $A\in \cB$ there is $C\in \cB$ such that
$\mu(A \cap C)=\frac{1}{2}\mu(A)$.
We say that two elements $A,B\in \cB$
determine the same \emph{event}, and write $A \sim_\mu B$ if
$\mu(A\triangle B)=0$.
The relation $\sim_\mu$ is an equivalence relation and the
collection of classes is denoted by $\overline B$ and is called the
\emph{measure algebra} associated to $(X,\cB,\mu)$.
Operations such as
unions, interesections and complements are well defined for events, as
well as the measure.
The distance between two events $a,b\in \overline B$ is
given by the measure of their symmetric difference.
This renders $\overline \cB$ a complete metric space.

Conversely, let
$(\sB,0,1,\cdot^c,\cup,\cap)$ be a Boolean algebra
and assume that $d$ is a complete metric on $\sB$.
Let $\mu(x)$ be an abbreviation for $d(0,x)$ and assume furthermore
that $d(x,y) = \mu(x \triangle y)$,
$\mu(x) + \mu(y) = \mu(x\cap y) + \mu(x \cup y)$
and $\mu(1) = 1$.
Then $\sB$ is the probability algebra associated to some probability
space
(and we may moreover take that space to be the Stone space of $\sB$,
equipped with the Borel $\sigma$-algebra).

We may view probability algebras as continuous
structures in the language
$\cL_{Pr} = \{0,1,\cdot^c,\cup,\cap\}$
(the distance symbol is always implicit, and the measure can be
recovered from it as above).
The class of probability algebras is elementary and admits  a
universal theory denoted $Pr$.
Its model completion is $APr$, the theory of atomless probability
algebras.
It admits quantifier elimination, is $\aleph_0$-categorical (even over
finitely many parameters) and $\aleph_0$-stable (see
\cite{BenYaacov-Usvyatsov:CFO,Berenstein-Henson:ProbabilityWithAutomorphism}).

\begin{dfn}
  Let $\sB$ be a probability algebra.
  An automorphism $\tau \in \Aut(\sB)$ is said to be \emph{aperiodic}
  if for every non-zero event $a \in \sB$ and every $n > 0$
  there is a sub-event $b \subseteq a$ such that
  $\tau^n(b) \neq b$.
  (In other words, the \emph{support} of $\tau^n$ is $1$ for all
  $n \geq 1$.)
\end{dfn}

\begin{fct}[Halmos-Rokhlin-Kakutani Lemma,
  {\cite[386C]{Fremlin:MeasureTheoryVol3}}]
  \label{fct:Rokhlin}
  Let $\sB$ be a probability algebra, $\tau \in \Aut(\sB)$.
  Then $\tau$ is aperiodic if and only if, for every $n \geq 1$
  and every $\varepsilon > 0$ there is $a \in \sB$
  such that $a$, $\tau a$, \ldots, $\tau^{n-1}(a)$ are disjoint
  and $n\mu(a) > 1-\varepsilon$.
\end{fct}

Now let $\cL_\tau = \cL_{Pr} \cup \{ \tau \}$ where
$\tau$ is a new unary function symbol.
Let $APr_\tau$ be the theory
$APr \cup \{\tau \text{ is an automorphism}\}$.
It was shown in \cite{Berenstein-Henson:ProbabilityWithAutomorphism}
that $APr_\tau$ admits a model companion $APr_A$,
consisting of $APr_\tau$ together with axioms saying that $\tau$ is
aperiodic.

\begin{dfn}
  By the \emph{Lebesgue space} we mean the probability space
  $([0,1],\lambda)$, where $\lambda$ is the standard Lebesgue measure.
  The associated probability algebra
  $\sL = \fB([0,1],\lambda)$ is the unique separable
  atomless probability algebra.

  An \emph{automorphism} $\tau$ of the Lebesgue space
  is a measurable, measure-preserving bijection
  between measure one subsets of $[0,1]$.
\end{dfn}

\begin{rmk}
  \begin{enumerate}
  \item 
    Every automorphism of the probability algebra $\sL$
    comes from an automorphism of the Lebesgue space.
  \item
    An automorphism $\tau$ of the Lebesgue space induces an aperiodic
    automorphism on $\sL$ if and only if $\tau$ itself is aperiodic,
    namely if
    $\lambda\{ x\in [0,1]\colon \tau^n(x)=x\} = 0$.
  \end{enumerate}
\end{rmk}

\begin{dfn}
  Let $\sA$ be a probability algebra.
  We equip $\Aut(\sA)$ with the uniform convergence metric
  \begin{gather*}
    d(\tau_0,\tau_1) = \sup_{x \in \sA} d(\tau_0(x),\tau_1(x)).
  \end{gather*}
  Let $(\sA_i,\tau_i) \models APr_A$ for $i = 0,1$
  and let $r \geq 0$.
  Then an \emph{$r$-perturbation} of $(\sA_0,\tau_0)$ to
  $(\sA_1,\tau_1)$ is an (isometric) isomorphism
  $f\colon \sA_0 \cong \sA_1$
  such that
  $d\bigl(  f\tau_0f^{-1}, \tau_1 \bigr) \leq r$.
\end{dfn}

Notice that this is essentially the same definition as for (unit balls
of) Hilbert space.
In particular, as in the Hilbert space case,
this definition satisfies the conditions of
\cite[Theorem~4.4]{BenYaacov:TopometricSpacesAndPerturbations}
and thereby comes from a perturbation system.

\begin{dfn}
  Let $\sA$ be a probability algebra, $\tau \in \Aut(\sA)$,
  $a \in \sA$.
  We say that $(a,\tau)$
  \emph{generate an $(n,\varepsilon)$-partition (of $\sA$)}
  if $a,\tau(a),\ldots,\tau^{n-1}(a)$ are disjoint and
  $\mu\left(\bigvee_{i<n}\tau^i(a)\right) \geq 1-\varepsilon$.
  An $(n,0)$-partition will simply be called an \emph{$n$-partition}.
\end{dfn}

If $\sA_i$, $i = 0,1$ are probability algebras and
$\sB = \sA_0 \otimes \sA_1$ is their free amalgam,
we may identify $\sA_0$ with its image
$\sA_0 \otimes 1 = \{a \otimes 1\colon a \in \sA_0\} \subseteq \sB$,
and similarly $\sA_1 \cong 1 \otimes \sA_1 \subseteq \sB$.
In particular, if $(\sA_i,\tau_i) \models APr_\tau$, $i = 0,1$
then
$(\sA_0 \otimes \sA_1, \tau_0 \otimes \tau_1) \models APr_\tau$ as well.
If in addition $(\sA_0,\tau_0) \models APr_A$
then $\tau_0$ is aperiodic, whereby so
is $\tau_0 \otimes \tau_1$, i.e.,
$(\sA_0 \otimes \sA_1, \tau_0 \otimes \tau_1) \models APr_A$.
Since $APr_A$ is model complete we conclude that
\begin{gather*}
  (\sA_0,\tau_0) \models APr_A, \,
  (\sA_1,\tau_1) \models APr_\tau
  \quad \Longrightarrow \quad
  (\sA_0,\tau_0) \preceq (\sA_0 \otimes \sA_1, \tau_0 \otimes \tau_1).
\end{gather*}

\begin{dfn}
  Let $(\sA,\tau_\sA) \preceq (\sB,\tau_\sB) \models APr_A$ be
  separable.
  We say that $(\sB,\tau_\sB)$ is \emph{partitioned over $\sA$} if:
  \begin{enumerate}
  \item The probability algebra $\sB$ is isomorphic to
    $\sA \otimes \sL$ over $\sA$ (meaning that 
    $a \in \sA$ gets mapped to $a \otimes 1$).
  \item Under this isomorphism,
    for each $0 < n \in \bN$ there exists
    $c_n \in \sL$ such that
    $(1 \otimes c_n,\tau_\sB)$ generate an $n$-partition, all of whose members
    belong to $1 \otimes \sL$.
  \end{enumerate}
\end{dfn}

\begin{lem}
  \label{lem:AutExtPart}
  Let $(\sA,\tau_\sA) \preceq (\sB,\tau_\sB) \models APr_A$ be separable.
  Then there exists a further elementary extension
  $(\sB,\tau_\sB) \preceq (\sC,\tau_\sC)$ which is partitioned over
  $\sA$.
\end{lem}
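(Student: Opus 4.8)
The plan is to meet the two requirements of being partitioned over $\sA$ in two separate stages: first produce, in one elementary extension, exact $n$-partitions for all $n$ at once (sitting in a part independent of $\sA$), and only afterwards re-coordinatize the resulting algebra so as to display $\sA$ as a tensor factor with those partitions lying in the complementary copy of $\sL$. Keeping these two tasks apart is what makes the argument manageable: the first uses only the free-amalgamation fact recorded just before the lemma, while the second is a purely measure-theoretic normalization.

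For the first stage I would, for each $n \ge 1$, build a separable model $(\sL, \sigma_n) \models APr_A$ carrying an exact $n$-partition. Concretely, cut $[0,1]$ into $n$ consecutive blocks $I_0, \dots, I_{n-1}$ and let $\sigma_n$ cyclically shift $I_j$ onto $I_{j+1}$, closing the cycle $I_{n-1} \to I_0$ by an irrational rotation within the block; then $\sigma_n^n$ restricts to an aperiodic irrational rotation on each block, so $\sigma_n$ is aperiodic, while $(I_0, \sigma_n)$ generates an exact $n$-partition. Set $(\sM, \rho) = \bigl(\bigotimes_{n \ge 1}(\sL, \sigma_n)\bigr) \otimes (\sL, \id)$, a separable atomless algebra with an automorphism, so $(\sM, \rho) \models APr_\tau$; the extra $(\sL,\id)$ factor is reserved as a source of independent noise for the second stage. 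By the amalgamation fact displayed before the lemma (with $(\sB,\tau_\sB) \models APr_A$ and $(\sM,\rho) \models APr_\tau$),
\[
  (\sB,\tau_\sB) \preceq (\sC,\tau_\sC) := (\sB \otimes \sM,\ \tau_\sB \otimes \rho),
\]
and $\tau_\sC$ is aperiodic because $\tau_\sB$ is, so $(\sC,\tau_\sC) \models APr_A$ and is separable. In $\sC$ each factor $\sL_n$ is $\tau_\sC$-invariant with $\tau_\sC \rest_{\sL_n} = \sigma_n$ and is independent from $\sB$ (hence from $\sA$); writing $c_n$ for the image of $I_0$ in $\sL_n$, the pair $(c_n,\tau_\sC)$ generates an exact $n$-partition all of whose members lie in $\sM_{\mathrm{part}} := \bigotimes_{n \ge 1}\sL_n$.

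It remains to realize condition (i), namely to exhibit an independent complement $\sL'$ of $\sA$ in $\sC$ with $\sM_{\mathrm{part}} \subseteq \sL'$, since then $\sC \cong \sA \otimes \sL'$ over $\sA$ with all partitions inside $1 \otimes \sL'$. Fix a uniform generator $X$ of $\sA$, a generator $Y$ of $\sB$ over $\sA$, and a uniform $U$ generating the reserved $(\sL,\id)$ factor, so that $U$ is independent of $\sB \vee \sM_{\mathrm{part}}$. Let $\nu_x$ be the conditional law of $Y$ given $X=x$; because of the independent atomless $U$, the law $\nu_x \otimes \mathrm{Leb}$ is atomless for almost every $x$, so there is a measurably varying family of measure-isomorphisms $\phi_x$ from $\nu_x \otimes \mathrm{Leb}$ onto Lebesgue measure on $[0,1]$. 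Setting $V' := \phi_X(Y,U)$ one gets $V' \perp X$ and, as $\phi_x$ is fibrewise invertible, $\sigma(X,V') = \sigma(X,Y,U)$. Then $\sL' := \sigma(V') \vee \sM_{\mathrm{part}}$ is independent from $\sA$ (here one uses $\sM_{\mathrm{part}} \perp (X,Y,U)$ together with $V' \perp X$), while $\sA \vee \sL' = \sigma(X,Y,U) \vee \sM_{\mathrm{part}} = \sB \vee \sM = \sC$. As $\sL'$ is separable and atomless it is isomorphic to $\sL$, giving the decomposition $\sC \cong \sA \otimes \sL'$ over $\sA$ with $\sM_{\mathrm{part}} \subseteq \sL'$, which secures both (i) and (ii).

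I expect the genuine difficulty to be entirely in the last stage, that is in reconciling condition (i) with the partitions already placed in the first stage: one must turn $\sA$ into a tensor factor \emph{without} disturbing the events $c_n, \tau_\sC c_n, \dots$, and this is exactly why the spare atomless factor $(\sL,\id)$ is built into $\sM$ --- it provides the relative atomlessness needed to run the fibrewise normalization (equivalently, a relative Maharam-homogeneity argument) while leaving $\sM_{\mathrm{part}}$ untouched inside the complement. By contrast, the partition-injection stage is routine once the amalgamation fact and the Halmos--Rokhlin--Kakutani picture of aperiodicity are in hand.
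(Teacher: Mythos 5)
Your proof is correct, and its first stage coincides with the paper's: adjoin, via the free-amalgamation/model-completeness fact displayed before the lemma, an automorphism of a Lebesgue factor realising exact $n$-partitions for every $n$, independent of $\sB$. Where you genuinely differ is the \emph{order} of the two steps, and that changes the character of the second half. The paper normalises before adjoining the partitions: it replaces $(\sB,\tau_\sB)$ by $(\sB,\tau_\sB)\otimes(\sL,\id)$, so that $\sB$ becomes atomless over $\sA$ and hence --- by the relative isomorphism theorem for separable probability algebras, invoked there without comment --- may be identified with $\sA\otimes\sL$ over $\sA$, as probability algebras only ($\tau_\sB$ is carried along in no special form, which suffices, since condition (i) of being partitioned concerns the algebra alone). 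Tensoring then with a single $(\sL,\rho)$ whose $\rho$ carries all $n$-partitions (essentially your $\bigotimes_n\sigma_n$ transported through $\bigotimes_n\sL\cong\sL$), the desired decomposition is automatic: $\sC=\sB\otimes\sL=\sA\otimes[\sL\otimes\sL]$, the partitions visibly lie in $1\otimes[\sL\otimes\sL]$, and $\sL\otimes\sL\cong\sL$ concludes. Because you adjoin the partitions first, you must re-coordinatise $\sA$ as a tensor factor afterwards without disturbing $\sM_{\mathrm{part}}$, and your reserved $(\sL,\id)$ factor together with the fibrewise isomorphisms $\phi_x$ --- a parametrised version of the very same relative Maharam fact --- is the price of that ordering. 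Your execution of it is sound: the independence of $\sigma(V')\vee\sM_{\mathrm{part}}$ from $\sA$, the identity $\sA\vee\sL'=\sC$, and the existence of a measurable family $\phi_x$ (e.g.\ a fixed Borel isomorphism $[0,1]^2\cong[0,1]$ followed by the conditional distribution function) all check out, and your appeal to this fact is at the same level of rigour as the paper's unproved appeal to the relative isomorphism theorem; but it is work the paper's ordering avoids entirely. A minor remark: aperiodicity of your $\sigma_n$ is unnecessary --- only $\tau_\sC$ must be aperiodic, which already follows from the aperiodicity of $\tau_\sB$.
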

\begin{proof}
  First of all, we may assume that $\sB$ is atomless over $\sA$.
  Indeed, 
  $(\sB',\tau_\sB') = (\sB,\tau_\sB) \otimes (\sL,\id)$
  is a separable elementary extension of $(\sB,\tau_\sB)$
  and we may replace the latter with the former.
  Therefore we may assume that
  $\sB = \sA \otimes \cL$.

  It is not difficult to construct an automorphism
  $\rho \in \Aut(\sL)$ such that for each $n$ there is
  $c_n \in \sL$ such that $(c_n,\rho)$ generate an $n$-partition.
  Let
  $(\sC,\tau_\sC) = (\sB \otimes \sL, \tau_\sB \otimes \rho)$,
  so $(\sB,\tau_\sB) \preceq (\sC,\tau_\sC)$.

  On the other hand we have
  $\sC = \sB \otimes \sL = \sA \otimes [\sL \otimes \sL]$,
  and for every $k < n$
  \begin{gather*}
    \tau_\sC^k(1 \otimes [1 \otimes c_n])
    = 1 \otimes [1 \otimes \rho^k(c_n)].
  \end{gather*}
  Now use the fact that $\sL \otimes \sL \cong \sL$ to conclude.
\end{proof}

\begin{ntn}
  For a probability algebra $\sC$ and $c \in \sC$, let
  $\sC_{\leq c}$ denote the ideal $\{c' \in \sC\colon c' \leq c\}$.
\end{ntn}

\begin{ntn}
  \label{ntn:SimplePartition}
  For $0 < n \in \bN$
  fix $(\ell_n,\rho_n) \in \sL \times \Aut(\sL)$
  generating an $n$-partition such
  that in addition $(\rho_n)^n = \id$.
  Note that this determines $(\sL,\rho_n)$ up to isomorphism,
  and that it is \emph{not} a model of $APr_A$.
\end{ntn}

\begin{lem}
  \label{lem:FiniteOrderAutPert}
  Let
  $(\sA,\tau_\sA) \preceq (\sB,\tau_\sB) \models APr_A$,
  and assume that $(\sB,\tau_\sB)$ is partitioned over $\sA$.
  Then for every $0 < n \in \bN$ there exists
  $\tau_\sB' \in \Aut(\sB)$ such that:
  \begin{enumerate}
  \item $d(\tau_\sB,\tau_\sB') \leq \frac{1}{2n}$.
  \item
    $(\sB,\tau_\sB') \cong (\sA,\tau_\sA) \otimes (\sL,\rho_n)$
    over $\sA$,
    where $(\cL,\rho_n)$ are as in \fref{ntn:SimplePartition}.
  \end{enumerate}
\end{lem}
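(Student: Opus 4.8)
The plan is to realise $\tau_\sB'$ by cutting and re-stacking a sufficiently tall Halmos--Rokhlin--Kakutani tower, arranging matters so that only a set of small measure is disturbed and the whole perturbation cost is paid once, at the top of the tower. Concretely, I would set $N = 4n$ and use the hypothesis that $(\sB,\tau_\sB)$ is partitioned over $\sA$ to fix $c \in \sL$ with $(1\otimes c,\tau_\sB)$ generating an $N$-partition whose members $p_j = \tau_\sB^j(1\otimes c)$ all lie in $1\otimes\sL$; thus $\mu(p_j) = 1/N$ and $\bigvee_{j<N} p_j = 1$. I then define $\tau_\sB'$ to agree with $\tau_\sB$ on the ideal $\sB_{\leq p_{N-1}^c}$ (i.e.\ to raise each of the levels $p_0,\dots,p_{N-2}$ by one) and to send the top level back to the bottom by some isomorphism $\psi\colon \sB_{\leq p_{N-1}} \cong \sB_{\leq p_0}$, still to be pinned down. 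This $\tau_\sB'$ is an automorphism, and since $\tau_\sB,\tau_\sB'$ differ only on $\sB_{\leq p_{N-1}}$ we get, for every $x$, that $\mu(\tau_\sB x \mathbin\triangle \tau_\sB' x) = \mu\bigl(\tau_\sB(x\wedge p_{N-1}) \mathbin\triangle \tau_\sB'(x\wedge p_{N-1})\bigr) \leq 2\mu(x\wedge p_{N-1}) \leq 2/N = 1/(2n)$. Hence (i) holds \emph{for any} choice of $\psi$, and the distance is essentially free once the tower is taken this tall.

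For (ii) I would group the levels by residue, setting $P_i = \bigvee_{j\equiv i \,(\mathrm{mod}\,n)} p_j$ for $i<n$: this partitions $1$ into $n$ pieces of measure $1/n$ lying in $1\otimes\sL$. Because raising levels by one shifts residues by one, one checks directly that $\tau_\sB'(P_i) = P_{i+1}$ for $i<n-1$, and the modification $p_{N-1}\mapsto p_0$ is exactly what makes $\tau_\sB'(P_{n-1}) = P_0$; so $\tau_\sB'$ permutes $P_0,\dots,P_{n-1}$ cyclically with period $n$, and $(\sB,\tau_\sB')$ is the height-$n$ suspension of its return system $(\sB_{\leq P_0},R)$ with $R = (\tau_\sB')^n\rest_{P_0}$. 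Since $\tau_\sB'\rest_\sA = \tau_\sA$, the automorphism $R$ acts as $\tau_\sA^n$ on the copy $\{(a\otimes 1)\wedge P_0 : a\in\sA\}$ of $\sA$ sitting inside $\sB_{\leq P_0}$.

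Now I would spend the remaining freedom in $\psi$. Writing $m = N/n$, the return map $R$ is itself the height-$m$ suspension of $(\sB_{\leq p_0},S)$ with $S = (\tau_\sB')^N\rest_{p_0} = \psi\circ(\tau_\sB^{N-1}\rest_{p_0})$, so after straightening the tower the isomorphism type of $R$ over $\sA$ depends only on $S$, which $\psi$ may set to be any automorphism of $\sB_{\leq p_0}$ that restricts to $\tau_\sA^N$ on the $\sA$-copy. Since $\tau_\sA$ is aperiodic, so is $\tau_\sA^n\otimes\id$ on $\sA\otimes\sL$, and by \fref{fct:Rokhlin} it carries a height-$m$ tower over $\sA$ (for instance one built from a height-$m$ tower of $\tau_\sA^n$ on $\sA$); I choose $S$ so that $(\sB_{\leq p_0},S)$ is isomorphic over $\sA$ to the base system of that tower, whence $R \cong \tau_\sA^n\otimes\id$ over $\sA$. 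Finally, $(\sA,\tau_\sA)\otimes(\sL,\rho_n)$ is exactly the height-$n$ suspension of $(\sA\otimes\sL,\tau_\sA^n\otimes\id)$ — its $n$-partition $\{1\otimes\rho_n^i\ell_n\}$ has return map $\tau_\sA^n\otimes\rho_n^n = \tau_\sA^n\otimes\id$ — and suspensions of base systems isomorphic over $\sA$ are isomorphic over $\sA$; this yields $(\sB,\tau_\sB')\cong(\sA,\tau_\sA)\otimes(\sL,\rho_n)$ over $\sA$.

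The step I expect to be the main obstacle is the last $\sA$-equivariant identification: verifying that a single closing map $\psi$ can be chosen so that $R$ is precisely the trivial $\sL$-extension of $(\sA,\tau_\sA^n)$, \emph{and} that this matching respects how the copy of $\sA$ is distributed across the fibres $P_i$ (recall $a\otimes 1 = \bigvee_i (a\otimes 1)\wedge P_i$ is spread over all fibres, not confined to the base). Everything else — the period-$n$ fibre structure and, crucially, the distance bound — is obtained cheaply from the single tall-tower modification, so the real work is confirming that straightening the suspension and transporting the Rokhlin base system can be done compatibly with $\sA$.
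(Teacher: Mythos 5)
There is a genuine gap, and it sits exactly at the step you flagged as the main obstacle --- but it is worse than a technical difficulty: \emph{no} choice of the closing map $\psi$ can make (ii) true. Since $\tau_\sB'$ agrees with $\tau_\sB$ on the ideal $\sB_{\leq p_{N-1}^c}$, and $\tau_\sB\bigl(p_{N-1}^c\bigr) = p_0^c$, bijectivity forces $\tau_\sB'(p_{N-1}) = p_0$; hence $\{p_j\}_{j<N}$ is an \emph{exact} $N$-partition cyclically permuted by $\tau_\sB'$, i.e.\ $(\sB,\tau_\sB')$ has a $\bZ/N\bZ$ factor. The target $(\sA,\tau_\sA)\otimes(\sL,\rho_n)$ need not have one when $N>n$. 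Take $(\sA,\tau_\sA)$ to be a Bernoulli shift --- it is aperiodic, so this is a legitimate instance of the lemma --- and note that then every nonzero power of $\tau_\sA$ is ergodic. Since $n \mid N$ we have $(\tau_\sA\otimes\rho_n)^N = \tau_\sA^N\otimes\id$, whose fixed algebra is exactly $1\otimes\sL$ by ergodicity of $\tau_\sA^N$. So if some $e$ generated a cyclically permuted exact $N$-partition of the target, then $e = 1\otimes x$, whence $(\tau_\sA\otimes\rho_n)^n e = 1\otimes\rho_n^n x = e$, contradicting disjointness of the first $n$ translates (as $0<n<N$ and $\mu(e)=1/N>0$). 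Hence $(\sB,\tau_\sB')\not\cong(\sA,\tau_\sA)\otimes(\sL,\rho_n)$, not even abstractly, let alone over $\sA$. The same invariant kills the return-map step: your $R=(\tau_\sB')^n\rest_{P_0}$ cyclically permutes the exact $4$-partition $\{p_0,p_n,p_{2n},p_{3n}\}$ of $P_0$, while $\tau_\sA^n\otimes\id$ has no $\bZ/4\bZ$ factor for Bernoulli $\tau_\sA$. Relatedly, \fref{fct:Rokhlin} only provides $(m,\varepsilon)$-partitions with $\varepsilon>0$; the exact cyclically permuted tower your choice of $S$ would be matched against generally does not exist.

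The conflict is structural, not reparable by care: once you insist that $\tau_\sB'=\tau_\sB$ off the top level of a tower, the tower closes up cyclically, so the torsion order of $(\sB,\tau_\sB')$ equals the tower height, whereas (ii) requires torsion of order exactly $n$. Thus the tower height must be $n$, the disturbed level then has measure $1/n$, and the ``pay once at the top of a tall tower'' saving is unavailable: cheap distance and (ii) cannot both be had this way. This is precisely the paper's route, run in the opposite direction from yours: instead of modifying $\tau_\sB$ and trying to recognise the result, it starts from the model automorphism and conjugates it into position. Working with the $n$-partition $1\otimes c_k$, it takes any identification $\theta_1=\id_\sA\otimes\theta_0\colon \sB_{\leq 1\otimes c}\cong\sB_{\leq 1\otimes\ell_n}$, spreads it along the tower by $\theta_2(b)=\bigvee_{k<n}(\tau_\sA\otimes\rho_n)^k\theta_1\tau_\sB^{-k}\bigl(b\wedge(1\otimes c_k)\bigr)$, checks that $\theta_2$ fixes $\sA$ pointwise, and sets $\tau_\sB'=\theta_2^{-1}(\tau_\sA\otimes\rho_n)\theta_2$, so that (ii) holds \emph{by construction}; one then verifies that $\tau_\sB'$ agrees with $\tau_\sB$ on all levels except the top one, i.e.\ off a set of measure $1/n$. (With a height-$n$ tower the natural estimate for (i) is of order $1/n$ rather than $1/(2n)$ --- this is also all the paper's own computation really yields --- but the constant is immaterial for the $\aleph_0$-stability theorem, which only needs perturbations that are arbitrarily small as $n\to\infty$.)
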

\begin{proof}
  We may assume that
  $(\sB,\tau_\sB) = (\sA \otimes \sL,\tau_\sB)$
  and that this identification witnesses that
  $(\sB,\tau_\sB)$ is partitioned over $\sA$.
  Therefore there exists $c \in \sL$ are such that
  $(1 \otimes c,\tau_\sB)$ generate an $n$-partition all
  of whose members are in $1 \otimes \sL$.
  Let
  $1 \otimes c_k = \tau_\sB^k(1 \otimes c) \in 1 \otimes \cL$
  for $k < n$ (or, for that matter, for all $k \in \bZ$).

  Let $\ell_n,\rho_n$ be as in \fref{ntn:SimplePartition}.
  Since $c,\ell_n \in \sL$ and
  $\mu(c) = \mu(\ell_n) = \frac{1}{n}$, there is an isomorphism
  $\theta_0\colon \sL_{\leq c} \cong \sL_{\leq \ell_n}$,
  which induces in turn
  $\theta_1 = \id_{\sA} \otimes \theta_0 \colon
  \sB_{\leq 1 \otimes c}
  \cong \sB_{\leq 1 \otimes \ell_n}$.
  We shall extend $\theta_1$ to an automorphism of $\sB$ as follows.
  For $b \in \sB$, observe that
  $b = \bigvee_{k<n} \bigl( b \wedge (1\otimes c_k) \bigr)$
  is a partition of $b$, and
  $\tau_\sB^{-k}\bigl(b \wedge (1 \otimes c_k) \bigr)
  \in \sB_{\leq 1 \otimes c}$.
  We then define
  \begin{gather*}
    \theta_2(b)
    = \bigvee_{k<n} (\tau_\sA \otimes \rho_n)^k\theta_1\tau_\sB^{-k}
    \bigl(b \wedge (1 \otimes c_k) \bigr).
  \end{gather*}
  For each $k < n$, $\theta_2$ restricts to an isomorphism
  $\sB_{\leq 1 \otimes c_k}
  \cong \sB_{\leq 1 \otimes \rho_n^k(\ell_n)}$,
  so $\theta_2 \in \Aut(\sB)$.
  In addition, if $a \in \sA$ then:
  \begin{align*}
    \theta_2\bigl((a \otimes 1) \wedge (1 \otimes c_k) \bigr)
    &
    = (\tau_\sA \otimes \rho_n)^k\theta_1\tau_\sB^{-k}(a \otimes c_k)
    \\ &
    = (\tau_\sA \otimes \rho_n)^k\theta_1\bigl( \tau_\sA^{-k}(a) \otimes c)
    \\ &
    = (\tau_\sA \otimes \rho_n)^k\bigl( \tau_\sA^{-k}(a) \otimes \ell_n)
    \\ &
    = a \otimes \rho_n^k(\ell_n),
  \end{align*}
  whereby $\theta_2(a \otimes 1) = a \otimes 1$.
  Thus $\theta_2$ acts as the identity on $\sA \otimes 1$.

  Let
  $\tau_\sB' = \theta_2^{-1} (\tau_\sA \otimes \rho_n) \theta_2$.
  For $b \in \sB$ and for $k < n$ let
  $b_k = b \wedge (1 \otimes c_k) \in \sB_{\leq 1 \otimes c_k}$,
  in which case
  $\theta_2(b_k) \in \sB_{\leq 1 \otimes \rho_n^k(\ell_n)}$
  and $(\tau_\sA \otimes \rho_n) \theta_2(b_k) \in
  \sB_{\leq 1 \otimes \rho_n^{k+1}(\ell_n)}$.
  Assume now in addition that $0 \leq k \leq n-2$, i.e., that
  $k+1 \leq n-1$.
  Then in the expression
  $\tau_\sB'(b_k)
  = \theta_2^{-1} (\tau_\sA \otimes \rho_n) \theta_2(b_k)$,
  the instances of $\theta_2^{-1}$ and $\theta_2$ can be expanded
  explicitly as follows:
  \begin{align*}
    \tau_\sB'(b_k) &
    = \theta_2^{-1} (\tau_\sA \otimes \rho_n) \theta_2(b_k)
    \\ &
    =
    \bigl[ \tau_\sB^{k+1} \theta_1^{-1}
    (\tau_\sA \otimes \rho_n)^{-k-1} \bigr]
    (\tau_\sA \otimes \rho_n)
    \bigl[ (\tau_\sA \otimes \rho_n)^k \theta_1 \tau_\sB^{-k} \bigr]
    (b_k)
    \\ &
    = \tau_\sB(b_k).
  \end{align*}
  (On the other hand, this fails for $k = n-1$, since then
  $\theta_2^{-1}$ expands to $\theta_1^{-1}$.)
  It follows that
  $d\bigl( \tau_\sB(b),\tau_\sB'(b) \bigr)
  = d\bigl( \tau_\sB(b_{n-1}),\tau_\sB'(b_{n-1}) \bigr)
  \leq \frac{1}{2n}$
  (since both events are sub-events of same measure
  of $1 \otimes c_{n-1}$ which has measure $\frac{1}{n}$).
  Thus $d(\tau_\sB,\tau_\sB') \leq \frac{1}{2n}$, as desired.
\end{proof}

\begin{thm}
  The theory $APr_A$ is $\aleph_0$-stable up to perturbations of the
  automorphism.
\end{thm}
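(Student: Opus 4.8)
The plan is to show that over any separable model $(\sA,\tau_\sA)\models APr_A$ the type space $\tS_1(\sA)$ is separable in the perturbation distance, by the same mechanism as in the Hilbert case (\fref{thm:HilbertAutAleph0Stable}): I will exhibit, for each $n$, a single separable extension of $(\sA,\tau_\sA)$ in which, \emph{up to a $\frac{1}{2n}$-perturbation}, every type over $\sA$ is already realised. The two preceding lemmas do exactly this once chained together: \fref{lem:AutExtPart} lets me assume a given type is realised in an extension partitioned over $\sA$, and \fref{lem:FiniteOrderAutPert} then perturbs the automorphism of that partitioned extension onto the fixed structure $(\sA,\tau_\sA)\otimes(\sL,\rho_n)$.

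In detail, fix $0<n\in\bN$ and let $p\in\tS_1(\sA)$ be arbitrary, realised by some $b$ in a separable $(\sB,\tau_\sB)\succeq(\sA,\tau_\sA)$. By \fref{lem:AutExtPart} I pass to a further separable elementary extension $(\sC,\tau_\sC)\succeq(\sB,\tau_\sB)$ that is partitioned over $\sA$; the element $b$ still realises $p$. Applying \fref{lem:FiniteOrderAutPert} to $(\sA,\tau_\sA)\preceq(\sC,\tau_\sC)$ yields $\tau_\sC'\in\Aut(\sC)$ with $d(\tau_\sC,\tau_\sC')\leq\frac{1}{2n}$ together with an isomorphism $(\sC,\tau_\sC')\cong(\sA,\tau_\sA)\otimes(\sL,\rho_n)$ over $\sA$, with $\rho_n$ as in \fref{ntn:SimplePartition}. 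The key point is that although $(\sL,\rho_n)$ is periodic, $\tau_\sA\otimes\rho_n$ is aperiodic because $\tau_\sA$ is, so by the free-amalgamation remark preceding \fref{lem:AutExtPart} we get $(\sA,\tau_\sA)\preceq(\sA,\tau_\sA)\otimes(\sL,\rho_n)$ and hence $(\sC,\tau_\sC')\models APr_A$; moreover this target depends only on $(\sA,\tau_\sA)$ and $n$, not on $p$. Since $\tau_\sC$ and $\tau_\sC'$ both restrict to $\tau_\sA$ on $\sA$, the identity map on $\sC$ is a $\frac{1}{2n}$-perturbation of $(\sC,\tau_\sC)$ onto $(\sC,\tau_\sC')$ fixing $\sA$ and fixing $b$. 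Thus $p$ lies within perturbation distance $\frac{1}{2n}$ of the type $q_n:=\tp(b/\sA)$ computed in the fixed model $(\sE_n,\sigma_n):=(\sA,\tau_\sA)\otimes(\sL,\rho_n)$.

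To finish, observe that $(\sE_n,\sigma_n)$ is a fixed separable model of $APr_A$ containing $\sA$, so the assignment $c\mapsto\tp(c/\sA)$, being $1$-Lipschitz for the logic metric $d$, sends $\sE_n$ onto a $d$-separable subset $D_n\subseteq\tS_1(\sA)$; by the previous paragraph every $p\in\tS_1(\sA)$ lies within perturbation distance $\frac{1}{2n}$ of $D_n$. Taking $D=\bigcup_n D_n$, a countable union of $d$-separable sets is $d$-separable, and since the perturbation distance $d^{\mathrm{Pert}}$ is dominated by $d$, the set $D$ is separable and $d^{\mathrm{Pert}}$-dense. Hence $(\tS_1(\sA),d^{\mathrm{Pert}})$ is separable, and running the same argument with $b$ a finite tuple (the global perturbation $\tau_\sC\mapsto\tau_\sC'$ fixes any such tuple) handles $\tS_m(\sA)$, giving $\aleph_0$-stability up to perturbation. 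I expect the only delicate points to be bookkeeping: verifying that the perturbed structure really is a model of $APr_A$ so that the $\frac{1}{2n}$-perturbation is admissible, and confirming that the family of $\frac{1}{2n}$-dense sets assembles into genuine perturbation-separability rather than mere $d$-separability.
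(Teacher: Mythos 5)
Your proof is correct and is essentially the paper's own argument: both chain \fref{lem:AutExtPart} with \fref{lem:FiniteOrderAutPert} to perturb an arbitrary partitioned extension realising a given type onto $(\sA,\tau_\sA)\otimes(\sL,\rho_n)$, using the aperiodicity of $\tau_\sA\otimes\rho_n$ to keep everything inside $APr_A$. The only cosmetic difference is that the paper composes two $\frac{1}{2n}$-perturbations so that every type is realised, up to perturbation, in a single fixed partitioned extension $(\sB,\tau_\sB)$, whereas you use the models $(\sA,\tau_\sA)\otimes(\sL,\rho_n)$ themselves as targets and assemble separability from the countable union of the resulting $d$-separable type sets; both packagings give the same conclusion.
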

\begin{proof}
  Let
  $(\sA,\tau_\sA) \models APr_A$.
  By \fref{lem:AutExtPart} $(\sA,\tau_\sA)$ admits a partitioned
  extension $(\sB,\tau_\sB)$.
  We claim that up to perturbation, every type over $\sA$ is realised
  in $(\sB,\tau_\sB)$.

  Indeed any such type is realised in some separable extension
  $(\sC,\tau_\sC) \succeq (\sA,\tau_\sA)$.
  By \fref{lem:AutExtPart} again we may assume that
  $(\sC,\tau_\sC)$ is also partitioned over $\sA$.
  By \fref{lem:FiniteOrderAutPert} any two partitioned extensions of
  $\sA$ admit an $\frac{1}{2n}$ perturbation over $\sA$ to
  $(\sA,\tau_\sA) \otimes (\sL,\rho_n)$, and composing these
  we obtain an $\frac{1}{n}$-perturbation
  $(\sC,\tau_\sC) \to (\sB,\tau_\sB)$ which fixes $\sA$,
  and this for every $0 < n \in \bN$.
\end{proof}

\section{Non superstability of probability algebras with a generic
  automorphism}
\label{sec:ProbAutNotSuperstable}

The second author and Henson
\cite{Berenstein-Henson:ProbabilityWithAutomorphism}
asked whether the theory of probability algebras with a generic
automorphism is superstable, as one might expect by analogy with a
theorem of Chatzidakis and Pillay
\cite{Chatzidakis-Pillay:GenericStructuresAndSimpleTheories}
regarding generic automorphisms in classical first order
logic.
In this section we present the first author's negative answer,
which chronologically came before the results in
\fref{sec:Probability}, and to a large extent motivated them.

Our aim is to show that the theory $APr_A$ admits many types over small
sets of parameters, and for this purpose it will suffice to show that
there are many $1$-types over parameters which belong to the fixed
algebra of the automorphism.
We therefore proceed in two steps, first characterising such types and
then showing there are many of them.
Throughout we let $(\cU,\tau)$ denote a monster model of $APr_A$,
and let $\cU^\tau$ denote the fixed sub-algebra of $\cU$ under
$\tau$.

\subsection{Types over the empty set and over fixed sub-algebras}

Let us try to describe the space of $1$-types in $APr_A$ over a set
of parameters contained in $\cU^\tau$.
We start with types over the empty set.

Consider a function $\eta\colon 2^{<\omega} \to [0,1]$
sending $s \mapsto \eta_s$.
We call such a function \emph{shift invariant} if
\begin{gather*}
  \tag{SI}\label{eq:ShiftInvariant}
  \eta_\emptyset = 1,
  \qquad
  \eta_{s\concat 0} + \eta_{s\concat 1}
  = \eta_{0\concat s}+ \eta_{1\concat s}
  = \eta_s, \qquad \text{for all } s \in 2^{<\omega}.
\end{gather*}
We define $X \subseteq [0,1]^{2^{<\omega}}$ to consist of all shift
invariant mappings.
This is a closed subset of $[0,1]^{2^{<\omega}}$, and therefore
compact.

Let $p \in \tS_1(APr_A)$, and
for $n \in \bN$, $s \in 2^n$, let
$\eta_{p,s} = \mu\left( \bigwedge_{i<n} \tau^i(x^{s_i}) \right)^p$
(where $x^0 = x$, $x^1 = x^c$ is the natural action of $(\bZ/2\bZ,+)$).
Then $\eta_p\colon s \mapsto \eta_{p,s}$ is shift invariant,
yielding a mapping $\rho\colon S_1(APr_A) \to X$
sending $p \mapsto \eta_p$.
This mapping $\rho$ is clearly continuous, and by
quantifier elimination it is injective.

Conversely, let $\eta \in X$,
and let $\sA$ be any sufficiently homogeneous atomless
probability algebra.
Then one can find in
$\sA$ a sequence of events $(a_n)$
such that
$\mu(a_0) = \eta_0$, $\mu(a_0)^c = \eta_1 = 1-\eta_0$,
and in general, for every $n$ and $s \in 2^n$:
\begin{gather*}
  \mu\Bigl( a_n \wedge \bigwedge_{k<n} a_k^{s_k} \Bigr)
  = \eta_{s\concat 0},
  \qquad
  \mu\Bigl( a_n^c \wedge \bigwedge_{k<n} a_k^{s_k} \Bigr)
  = \eta_{s\concat 1} = \eta_s - \eta_{s\concat 0}.
\end{gather*}
This is indeed consistent by shift invariance.
Moreover, shift invariance implies that for every
$n,k \in \bN$ and $s\in 2^n$:
$\eta_s = \mu\left( \bigwedge_{i<n} a_{k+i}^{s_i} \right)$
(by induction on $k$).
It follows by quantifier elimination that the mapping
$a_n \mapsto a_{n+1}$ is elementary and therefore extends to an
automorphism $\tau_\sA \in \Aut(\sA)$,
and we may embed $(\sA,\tau_\sA)$ in $(\cU,\tau)$.
In other words, for every $\eta \in X$ we can find
$a \in \cU$ such that
$\eta_s = \mu\left( \bigwedge_{i<n} \tau^i(a^{s_i}) \right)$
for all $s \in 2^{<\omega}$.
Thus $\eta = \rho(\tp(a))$, showing that $\rho$ is bijective.
Since it is also continuous, from a compact to a Hausdorff space, it
is a homeomorphism.

If $Y$ is an arbitrary topological space
we have
$C(Y,[0,1])^{2^{<\omega}} = C\bigl( Y, [0,1]^{2^{<\omega}} \bigr)$
as sets.
Equipping $C(Y,Z)$ with the compact-open topology
and $2^{<\omega}$ with the discrete topology these are homeomorphisms.
(The common topology can be given by a sub-basis, where a sub-basic
open set is of the form
$\bigl\{ f \in C(Y\times 2^{<\omega},[0,1])
\colon f[K\times\{s\}] \subseteq U\bigr\}$,
with $K \subseteq Y$ compact, $s \in 2^{<\omega}$ and
$U \subseteq [0,1]$ is open).
We may define when a mapping
$\eta \in C(Y,[0,1])^{2^{<\omega}}$
is shift invariant by \fref{eq:ShiftInvariant} as above,
and let $X_Y$ be the set of all such shift invariant functions.
It is then clear that $X_Y = C(Y,X)$.

We now turn to types over a sub-algebra $\sA \subseteq \cU^\tau$,
namely over parameters fixed by $\tau$.
We shall use the following.
\begin{fct}
  Let $\sA$ be a probability algebra, and let
  $\widetilde \sA$ be the Stone space of the underlying Boolean algebra.
  For an event $a \in \sA$ let $\tilde a \subseteq \widetilde \sA$ be
  the corresponding clopen set.
  \begin{enumerate}
  \item
    The space $\widetilde \sA$ admits a unique regular Borel
    probability measure $\tilde \mu$ such that
    $\tilde \mu(\tilde a) = \mu(a)$
    for all $a \in \sA$.
  \item
    The probability algebra of $(\widetilde \sA,\tilde \mu)$
    is canonically isomorphic to
    $\sA$, identifying the equivalence class of $\tilde a$ with the
    event $a$.
  \item
    The natural mapping
    $C(\widetilde \sA,\bR) \to L_\infty(\widetilde \sA,\tilde \mu)$
    is bijective.
    In other words, every equivalence class of bounded Borel functions
    up to equality $\tilde \mu$-a.e.\ contains a unique continuous
    representative.
  \item
    Let $\sB \supseteq \sA$ be a larger probability algebra.
    Then there exists a conditional expectation operation
    $\bE[\cdot|\sA]\colon L_1(\widetilde \sB) \to L_1(\widetilde \sA)$
    where $\bE[f|\sA]$ is the unique function such that
    for all $a \in \sA$:
    \begin{gather*}
      \int_{\tilde a \subseteq \widetilde \sB} f\, d\tilde \mu_\sB
      =
      \int_{\tilde a \subseteq \widetilde \sA} \bE[f|\sA]\,d\tilde \mu_\sA.
    \end{gather*}
  \end{enumerate}
\end{fct}
\begin{proof}
  See \cite{Fremlin:MeasureTheoryVol3}.
  The Stone space associated to a Boolean algebra is discussed in 311.
  The construction of the measure $\tilde \mu$ and the isomorphism
  between $\sA$ and the probability algebra of
  $(\widetilde \sA,\tilde \mu)$ appears in 321J.
  The identification of $C(\widetilde \sA,\bR)$ and
  $L_\infty(\widetilde \sA,\tilde \mu)$ can be found in
  363 and 364K.
  Finally, conditional expectations are discussed in 365R.
\end{proof}

Now let $\sA \subseteq \cU^\tau \subseteq \cU$,
and let $p \in \tS_1(\sA)$, say realised by $b \in \cU$.
For $s \in 2^n$, the type $p$ determines the mapping associating
to every $a \in \sA$
the measure
$\mu\left( a \wedge \bigwedge_{i<n} \tau^i(b^{s_i}) \right)$.
In other words, $p$ determines the function
$\eta_{p,s}
= \bP\Bigl[ \bigwedge_{i<n} \tau^i(b^{s_i}) \Big| \sA \Bigr]
\in L_1(\widetilde \sA)$.
Since the essential range of $\eta_{p,s}$ lies in $[0,1]$ we have
$\eta_{p,s} \in L_\infty(\widetilde \sA) = C(\widetilde \sA,\bR)$,
and in fact $\eta_{p,s} \in C(\widetilde \sA,[0,1])$.
Let
$\eta_p \in C(\widetilde \sA,[0,1])^{2^{<\omega}}$,
$\eta_p\colon s \mapsto \eta_{p,s}$.
It is not difficult to see that
$\eta_p$ satisfies \fref{eq:ShiftInvariant}, so
identifying
$C(\widetilde \sA,[0,1])^{2^{<\omega}}$
with
$C(\widetilde \sA,[0,1]^{2^{<\omega}})$
we actually have
$\eta_p \in X_\sA := C(\tilde \sA,X)$.
We have thus obtained a mapping
$\rho_\sA \colon \tS_1(\sA) \to X_\sA$.
Again, it is injective by quantifier elimination and continuous,
and a construction as above yields that it is
surjective.
We have thus obtained a homeomorphism
\begin{gather*}
  \rho_\sA \colon \tS_1(\sA) \cong X_\sA = C(\widetilde \sA,X).
\end{gather*}

For a closed set $K \subseteq X$ and $a \in \sA$ define
\begin{gather*}
  K^a
  = \{ \eta \in X_\sA
  \colon
  \eta[\tilde a] \subseteq K \}.
\end{gather*}
It is not difficult to see that $K^a$ is closed in the compact-open
topology.
If $\pi(x)$ is a partial type over $\emptyset$ and
$K \subseteq X$ corresponds to the closed set
$[\pi] \subseteq \tS_1(APr_A)$,
then the closed set $K^a$ corresponds to
$[\pi^a] \subseteq \tS_1(\sA)$,
where $\pi^a$ is a partial type over $\sA$.
If $\pi^a(b)$ holds we say that $b$ \emph{satisfies $\pi$ over $a$}.

Let $d_\sA$ denote the distance function between types over $\sA$, and
similarly $d_\emptyset$.
It is fairly easy to verify that the distance mapping
$d_\emptyset\colon \tS_1(APr_A)^2 \to [0,1]$,
i.e., $d_\emptyset\colon X^2 \to [0,1]$,
is Borel measurable
(but not continuous, since the theory is not $\aleph_0$-categorical).
Thus, if $p,q \in \tS_1(\sA)$,
then $d_\emptyset \circ (\eta_p,\eta_q)$ is a random
variable from $\sA$ to $[0,1]$, which we can integrate.

\begin{lem}
  For all $p,q \in \tS_1(\sA)$: $d_\sA(p,q) \geq \int d_\emptyset\circ(\eta_p,\eta_q)\, d\tilde \mu$.
\end{lem}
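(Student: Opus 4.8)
The statement asserts a lower bound on the distance $d_\sA(p,q)$ between two types over the fixed sub-algebra $\sA$ in terms of the fiberwise distance $d_\emptyset\circ(\eta_p,\eta_q)$ integrated against $\tilde\mu$. Since $\rho_\sA$ identifies $\tS_1(\sA)$ with $C(\widetilde\sA,X)$, a type $p$ is literally a continuous assignment $\tilde\omega\mapsto\eta_p(\tilde\omega)\in X$ of a ``local'' empty-set type to each point of the Stone space. The plan is to realize $p$ and $q$ by genuine elements $b,b'\in\cU$ with $d_\sA(p,q)=\mu(b\triangle b')$ as close to optimal as we like, and then to bound $\mu(b\triangle b')$ from below by disintegrating the symmetric difference over $\sA$ via the conditional expectation. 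The conditional measure of $b\triangle b'$ at a point $\tilde\omega$ should dominate the distance $d_\emptyset$ between the local types $\eta_p(\tilde\omega),\eta_q(\tilde\omega)$, because those local types are exactly the laws of $b,b'$ conditioned on $\sA$, and no coupling of two random variables can have symmetric-difference measure smaller than the distance between their laws.

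\emph{First I would} unwind the definition of $d_\sA(p,q)$. Working in the monster model, $d_\sA(p,q)=\inf\{\mu(b\triangle b')\}$ where $b\models p$, $b'\models q$ range over realizations with the same type over $\sA$ respected; concretely one fixes $b\models p$ and takes the infimum over $b'\models q$, using homogeneity of $\cU$ to move realizations while fixing $\sA$ pointwise. \emph{Next I would} observe that for any such pair $b,b'$, the event $b\triangle b'$ has conditional expectation $\bP[b\triangle b'\mid\sA]=\bP[b\wedge (b')^c\mid\sA]+\bP[b^c\wedge b'\mid\sA]\in C(\widetilde\sA,[0,1])$, and that integrating this continuous representative against $\tilde\mu$ recovers $\mu(b\triangle b')$ by the defining property of conditional expectation (item (iv) of the Fact, with $a=1$). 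So it suffices to show the pointwise inequality
\begin{gather*}
  \bP[b\triangle b'\mid\sA](\tilde\omega)\ \geq\ d_\emptyset\bigl(\eta_p(\tilde\omega),\eta_q(\tilde\omega)\bigr)
\end{gather*}
for $\tilde\mu$-almost every $\tilde\omega$, and then integrate.

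\emph{The crux} is the pointwise inequality, and I expect this to be the main obstacle. The idea is that $\eta_p(\tilde\omega)$ and $\eta_q(\tilde\omega)$ are the local empty-set types describing the conditional laws of the $\tau$-orbits of $b$ and $b'$ at $\tilde\omega$; in particular the first coordinates $\eta_{p,0}(\tilde\omega)=\bP[b\mid\sA](\tilde\omega)$ and $\eta_{q,0}(\tilde\omega)=\bP[b'\mid\sA](\tilde\omega)$ are the conditional probabilities of $b$ and $b'$. The conditional law of the pair $(b,b')$ at $\tilde\omega$ is a coupling of these two local types, and $\bP[b\triangle b'\mid\sA](\tilde\omega)$ is precisely the coupling's probability of disagreement. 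Since $d_\emptyset$ is realized as the infimum of $\mu(\,\cdot\triangle\cdot\,)$ over all couplings of two empty-set types, any particular coupling gives an upper bound for $d_\emptyset$; thus the conditional disagreement probability at $\tilde\omega$ is at least $d_\emptyset(\eta_p(\tilde\omega),\eta_q(\tilde\omega))$. Making this rigorous requires care in passing from the global realizations $b,b'$ to honest conditional laws at a point — this is where the Stone-space picture and the bijection $C(\widetilde\sA,\bR)\cong L_\infty(\widetilde\sA)$ from the Fact do the work, guaranteeing that the continuous representatives $\eta_{p,s},\eta_{q,s}$ genuinely evaluate the local types pointwise. \emph{Finally} I would integrate the pointwise bound over $\widetilde\sA$ and take the infimum over realizations $b'$ of $q$ on the left, yielding $d_\sA(p,q)\geq\int d_\emptyset\circ(\eta_p,\eta_q)\,d\tilde\mu$ as claimed.
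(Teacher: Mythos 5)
Your proposal is correct and follows essentially the same route as the paper: the paper's (very terse) proof also takes realizations $a \models p$, $b \models q$, sets $g = \bP[a \triangle b \mid \sA]$, asserts the pointwise inequality $g \geq d_\emptyset \circ (\eta_p,\eta_q)$, and concludes by integrating and taking the infimum over realizations. Your coupling justification of the pointwise step — that the conditional joint law at each point of $\widetilde\sA$ is a $2$-type extending both local $1$-types, so its distance coordinate dominates $d_\emptyset$ of the marginals — is precisely the detail the paper leaves implicit.
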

\begin{proof}
  Assume $a \models p$, $b \models q$.
  Let $g = \bP[a\triangle b|\sA]$.
  Then $g \geq d_\emptyset\circ(\eta_p,\eta_q)$.
\end{proof}

\subsection{Non superstability proof}

We say that a continuous theory is \emph{small} if the metric on
$\tS_n(APr)$ is separable for all $n \in \bN$.
\begin{lem}
  The theory $APr_A$ is not small.
  More precisely, there is an uncountable family of types over the
  empty set every two of which have distance $\geq\frac{1}{2}$.
\end{lem}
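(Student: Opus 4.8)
The plan is to realize the required family as stationary ``pure--frequency'' processes and to bound the distances from below by a spectral orthogonality argument. First, recall that by the homeomorphism $\rho\colon \tS_1(APr_A) \cong X$ established above it suffices to produce uncountably many shift invariant $\eta \in X$ whose corresponding types are pairwise at distance $\geq \frac12$, and that every $\eta \in X$ is realized by some $a \in \cU$. To each type $p$ of an event of measure $\frac12$ I attach a spectral invariant. If $a \models p$, write $\hat a = \mathbf 1_a - \frac12 \in L_2(\cU)$ and let $U\colon L_2(\cU) \to L_2(\cU)$, $Uf = f \circ \tau^{-1}$, be the (unitary) Koopman operator of $\tau$. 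The autocorrelations $\langle U^n \hat a, \hat a\rangle = \mu(a \cap \tau^n a) - \frac14$ are coordinate sums of $\eta_p$, hence depend only on $p$; by Herglotz's theorem they are the moments of a unique positive measure $\nu_p$ on $S^1$ (the spectral measure of $\hat a$), which is therefore an invariant of $p$.

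The key step is the following lower bound: if $p, q \in \tS_1(APr_A)$ both have measure $\frac12$ and the spectral measures $\nu_p, \nu_q$ are mutually singular, then $d_\emptyset(p,q) \geq \frac12$. Indeed, let $a \models p$ and $b \models q$ be realized in the common model $\cU$, so that $\hat a, \hat b$ lie in $L_2(\cU)$ with the common unitary $U$. Let $E$ be the projection valued spectral measure of $U$, so that $\nu_p = \langle E(\cdot)\hat a, \hat a\rangle$ and $\nu_q = \langle E(\cdot)\hat b, \hat b\rangle$. Choosing a Borel set $B \subseteq S^1$ with $\nu_p(S^1 \setminus B) = \nu_q(B) = 0$, and using that $E(B)$ commutes with every $U^n$ together with $E(B)\hat a = \hat a$ and $E(B)\hat b = 0$, one gets $\langle U^n \hat a, \hat b\rangle = \langle U^n E(B)\hat a, \hat b\rangle = \langle U^n \hat a, E(B)\hat b\rangle = 0$ for all $n$; in particular $\langle \hat a, \hat b\rangle = 0$. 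Since $\mu(a) = \mu(b) = \frac12$ we have $\mathbf 1_a - \mathbf 1_b = \hat a - \hat b$, whence $\mu(a \triangle b) = \|\mathbf 1_a - \mathbf 1_b\|_2^2 = \|\hat a\|_2^2 + \|\hat b\|_2^2 - 2\,\mathrm{Re}\langle \hat a, \hat b\rangle = \frac14 + \frac14 = \frac12$. As this holds for all realizations, $d_\emptyset(p,q) \geq \frac12$.

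It remains to produce uncountably many measure-$\frac12$ types with pairwise mutually singular spectral measures. For $\theta \in (0,1)$ let $p_\theta$ be the type whose $\eta$ records the finite dimensional distributions of the stationary process $\bigl(\mathbf 1_{[0,1/2)}(x + n\theta)\bigr)_{n \in \bZ}$ on $\bR/\bZ$ with Lebesgue measure (stationarity of this rotation process guarantees the resulting $\eta$ lies in $X$). Expanding $\mathbf 1_{[0,1/2)} - \frac12$ in a Fourier series shows that $\nu_{p_\theta}$ is atomic, supported on $\{e^{2\pi i k\theta} : k \in \bZ,\ k \neq 0\}$. Now fix an uncountable set $T \subseteq (0,1)$ that is linearly independent over $\bQ$ together with $1$ (a transcendence basis of $\bR$ over $\bQ$, reduced mod $1$, provides one). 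For distinct $\theta, \theta' \in T$ and nonzero integers $k, k'$, an equality $k\theta \equiv k'\theta' \pmod 1$ would be a nontrivial rational relation among $\theta, \theta', 1$, which is excluded; hence the supports of $\nu_{p_\theta}$ and $\nu_{p_{\theta'}}$ are disjoint and the measures are mutually singular. By the previous paragraph $d_\emptyset(p_\theta, p_{\theta'}) \geq \frac12$ for all distinct $\theta, \theta' \in T$, yielding the required uncountable family.

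The main obstacle is the orthogonality step: one must see that the distance is controlled by the \emph{individual} spectral measures $\nu_p, \nu_q$, which are invariants of the separate types, rather than by the joint type of $(a,b)$, over which we have no control. This is exactly what mutual singularity buys, via the bound $|\langle E(\cdot)\hat a, \hat b\rangle| \leq \nu_p^{1/2}\nu_q^{1/2}$ on the cross spectral measure (equivalently, the orthogonality of cyclic subspaces of singular spectral type). The remaining points, namely that $\nu_p$ is well defined from the autocorrelations, that each rotation process yields a genuine element of $X$ realized in $\cU$, and the elementary number theory giving disjoint supports, are routine.
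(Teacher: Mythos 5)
Your proof is correct, and although you use essentially the same uncountable family as the paper --- types of half-circles under rotations by angles that, together with $1$, are linearly independent over $\bQ$ --- the mechanism you use to certify the separation is genuinely different. The paper argues by simultaneous Diophantine approximation (Kronecker/Weyl): for $1,\alpha,\beta$ rationally independent one finds $n$ with $n\alpha$ near an integer and $n\beta$ near a half-integer, so that $\tau^n$ nearly fixes any realization of $p_\alpha$ while nearly exchanging any realization of $p_\beta$ with its complement; the triangle inequality then yields $d(a,b) \geq \half - 2\varepsilon$ for every $\varepsilon$. You instead pass to the Koopman unitary on $L_2(\cU)$, observe that the spectral measure of the centred indicator is an invariant of the type (by Herglotz uniqueness applied to the autocorrelations, which are read off from $\eta_p$), and show that mutual singularity of these spectral measures forces the centred indicators of \emph{any} two realizations to be orthogonal, whence $\mu(a \triangle b) = \half$ exactly. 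Your route costs more machinery (spectral theorem for unitaries, Herglotz's theorem), but it buys a sharper and more reusable criterion: it computes the distance exactly rather than bounding it from below, and it applies to any family of measure-$\half$ types with pairwise singular spectral types, atomic or not (e.g.\ weakly mixing examples), whereas the paper's argument is tied to almost-periodicity of rotations. One nitpick: the Fourier expansion of $\mathbf{1}_{[0,1/2)} - \half$ has nonzero coefficients only at odd frequencies, so $\nu_{p_\theta}$ is in fact supported on $\{e^{2\pi i k \theta} \colon k \text{ odd}\}$ rather than on all $k \neq 0$; since your disjointness argument only needs the support to be \emph{contained} in $\{e^{2\pi i k\theta} \colon k \neq 0\}$, this is harmless.
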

\begin{proof}
   For every real $\alpha$, let $p_\alpha$ be the type of one half of the
  circle on which $\tau$ acts by rotation by $2\pi\alpha$  (the measure on the
  circle being the Lebesgue measure normalised to have total length
  $1$).
  If $\alpha,\beta \geq 0$ are irrational and linearly independent
  over the rationals then for every $\varepsilon > 0$ there exist
  $n,k,\ell \in \bN$ such that
  $|n\alpha-k|, |n\beta-\ell-\half| < \varepsilon$.
  If $a \models p_\alpha$, $b \models p_\beta$ then 
  $d(a,\tau^n(a)) = \mu(a\triangle \tau^n(a)) < 2\varepsilon$
  while
  $d(b,\tau^n(b)) > 1 - 2\varepsilon$.
  It follows that
  $2d(a,b) = d(a,b)+d(\tau^n(a),\tau^n(b)) \geq 1-4\varepsilon$,
  namely
  $d(a,b) \geq \frac{1}{2} - \varepsilon$.
  Therefore $d(p_\alpha,p_\beta) \geq \frac{1}{2}$
  (it is not difficult to check that the distance between the types is
  in fact equal to $\half$).

  Let $S \subseteq \bR$ be a vector space base for $\bR$ over $\bQ$.
  Then follows that $\{p_\alpha\colon \alpha \in S\}$ is a continuum-size set of
  equally distanced types.
\end{proof}

\begin{prp}
  \label{prp:TypeTree}
  There exists a family $\{\pi_s(x)\}_{s \in 2^{<\omega}}$
  of partial types over $\emptyset$ such that
  \begin{enumerate}
  \item
    For all $s \in 2^{<\omega}$ we have
    $d(\pi_{s \concat 0},\pi_{s \concat 1}) \geq \frac{1}{3}$,
    meaning that $d(a_0,a_i) \geq \frac{1}{3}$
    whenever $a_i \models \pi_{s \concat i}$.
  \item If $s, t \in 2^{<\omega}$ and $t$ extends $s$ then 
    $\pi_t \vdash \pi_s$.
  \end{enumerate}
\end{prp}
\begin{proof}
  This is a metric Cantor-Bendixson rank argument which applies more
  generally, saying that if $T$ is a non-small theory with a countable
  language then such a tree exists (with $\frac{1}{3}$ possibly
  replaced with another positive constant).
  For an even more general statement of this fact see
  \cite[Propositions 3.16 and 3.19]{BenYaacov:TopometricSpacesAndPerturbations}.

  Define compacts subsets
  $X_\alpha \subseteq \tS_1(\emptyset)$ by induction on $\alpha$.
  Start with $X_0 = \tS_1(\emptyset)$;
  for $\alpha$ limit, $X_\alpha = \bigcap_{\beta<\alpha} X_\beta$;
  and given $X_\alpha$, obtain $X_{\alpha+1}$ by removing from $X_\alpha$ all
  points for which there is a relatively open neighbourhood of
  diameter $< \frac{1}{2}$.

  Since the language is countable, the topology on $X$ admits a
  countable base.
  If we only take out basic open sets of diameter $< \frac{1}{2}$ we
  still get the same sequence $X_\alpha$, and since the base is countable
  the sequence stabilises before $\aleph_1$.
  Let $S \subseteq \tS_1(\emptyset)$ be an uncountable subset of types every pair of
  which have distance $\geq \frac{1}{2}$.
  Each set of diameter $< \frac{1}{2}$ can contain at most one member
  of $S$, so $X_{\aleph_1} \neq \emptyset$.
  The topological space $X_{\aleph_1}$ (with the induced topology) is
  $\frac{1}{2}$-perfect, meaning that every non-empty open subset has
  diameter $\geq \frac{1}{2}$.

  Let
  $D = \{(q,r) \in \tS_1(APr_A)^2\colon d(q,r) \leq \frac{1}{3}\}$
  and $D_1 = D \cap X_{\aleph_1}^2$.
  Then $D$ is closed, being the image of the closed
  set $[d(x,y) \leq \frac{1}{3}] \subseteq \tS_2(APr_A)$
  under the projection
  $\tS_2(APr_A) \to \tS_1(APr_A)^2$ (a continuous mapping
  from a compact space to a Hausdorff space is always closed).
  Thus $D_1$ is closed in $X_{\aleph_1}^2$.

  Through the end of the proof we work in $X_{\aleph_1}$, with the induced
  topology.
  In particular, if $Y \subseteq X_{\aleph_1}$ then $Y^\circ$ denotes
  the interior of $Y$ in this topology.

  We start with $\pi_\emptyset(x)$ being the partial type defining
  $X_{\aleph_1}$.
  It has the property that
  $[\pi_\emptyset]^\circ \neq \emptyset$.
  Assume now we have $\pi_s$ such that
  $[\pi_s]^\circ \neq \emptyset$.
  The interior has diameter $\geq \frac{1}{2}$, so there are
  $q,r \in [\pi_s]^\circ$ such that $d(r,q) > \frac{1}{3}$.
  Thus $(q,r) \notin D_1$, so they admit open neighbourhoods
  $q \in U \subseteq [\pi_s]^\circ$ and
  $r \in V \subseteq [\pi_s]^\circ$ such that
  $(U \times V) \cap D_1 = \emptyset$.
  We can then find smaller open neighbourhoods such that
  $q \in U_1 \subseteq \overline U_1 \subseteq U$ and
  $r \in V_1 \subseteq \overline V_1 \subseteq V$.
  Letting $\pi_{s\concat 0}$ be the partial type defining
  $\overline U_1$ and $\pi_{s\concat 1}$ the partial type defining
  $\overline V_1$
  we get:
  $[\pi_{s}]
  \supseteq [\pi_{s\concat 0}]
  \supseteq [\pi_{s\concat 0}]^\circ
  \neq \emptyset$
  and 
  $[\pi_{s}]
  \supseteq [\pi_{s\concat 1}]
  \supseteq [\pi_{s\concat 1}]^\circ
  \neq \emptyset$.
  Finally,
  $([\pi_{s\concat 0}] \times [\pi_{s\concat 1}]) \cap D_1
  = \emptyset$
  implies
  $d(\pi_{s\concat 0},\pi_{s\concat 1}) > \frac{1}{3}$.

  Repeating this argument we obtain the required partial types.
\end{proof}

\begin{lem}
  The theory $APr_A$ $\lambda$-stable if and only if
  $\lambda^{\aleph_0} = \lambda$.
\end{lem}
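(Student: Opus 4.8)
The plan is to compute, for a parameter set $A$ of density character $\le\lambda$, the metric density character of $\tS_1(A)$, and to show it equals $(\operatorname{dens}A)^{\aleph_0}$, so that it is $\le\lambda$ precisely when $\lambda^{\aleph_0}=\lambda$. Since $\lambda$-stability means exactly that $\tS_n(A)$ has metric density $\le\lambda$ whenever $|A|\le\lambda$, the equivalence will follow (finite tuples behave as singletons throughout, so I treat $\tS_1$). First I would reduce: replacing $A$ by $\dcl_\tau(A)$ changes neither the density of $A$ nor the type space, so I may assume the parameters form a $\tau$-invariant subalgebra; the main case is that of a fixed subalgebra $\sA\subseteq\cU^\tau$, where the homeomorphism $\rho_\sA\colon\tS_1(\sA)\cong C(\widetilde\sA,X)$ and the distance estimate of the preceding lemma are available, and the general $\tau$-invariant case is handled by the same conditional-distribution bookkeeping with $\tau$ now acting on the parameters.

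For the implication $\lambda^{\aleph_0}=\lambda\Rightarrow\lambda$-stable I would prove the upper bound $\operatorname{dens}\tS_1(\sA)\le(\operatorname{dens}\sA)^{\aleph_0}$. The key structural point is that a type $p$ is coded by the countable family $(\eta_{p,s})_{s\in2^{<\omega}}$ of elements of $L^\infty(\widetilde\sA)$, and each such conditional expectation depends on only countably many Maharam coordinates of $\sA$; hence $\eta_p$ is supported on a separable subalgebra $\sA_J\cong\sL$ indexed by a countable set $J$. There are $(\operatorname{dens}\sA)^{\aleph_0}$ choices of $J$; the restriction map from the types supported on $\sA_J$ to $\tS_1(\sA_J)$ is an isometry (the complementary factor is independent and cannot shrink distances); and over the separable parameter set $\sA_J$ the soft bound for a countable language gives $\operatorname{dens}\tS_1(\sA_J)\le2^{\aleph_0}$. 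Summing over $J$ yields $\operatorname{dens}\tS_1(\sA)\le(\operatorname{dens}\sA)^{\aleph_0}\cdot2^{\aleph_0}=(\operatorname{dens}\sA)^{\aleph_0}\le\lambda^{\aleph_0}=\lambda$.

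Conversely, to show $\lambda^{\aleph_0}>\lambda\Rightarrow$ not $\lambda$-stable I would exhibit, over a fixed subalgebra of density $\lambda$, a $\lambda^{\aleph_0}$-sized family of types that is uniformly separated. For each $n\ge1$ the lag-$n$ invariant $p\mapsto\bP[x\triangle\tau^n x\mid\sA]\in L^1(\widetilde\sA)$ is $2$-Lipschitz from $(\tS_1(\sA),d_\sA)$ to $L^1(\widetilde\sA)$, so types whose lag-$n$ invariants are $\ge2\varepsilon$ apart in $L^1$ are $\ge\varepsilon$ apart; crucially this uses a single, fixed lag, so the bound does not degrade. Taking $\sA=\bigotimes_{n\ge1}\sC_n$ with each $\sC_n$ a fixed subalgebra of density $\lambda$, I would realise types $p_f$ indexed by $f\in\prod_n\lambda$ whose lag-$n$ invariant is a prescribed $\sC_n$-measurable value drawn from a $2\varepsilon$-separated family in $L^1(\sC_n)$, the factors chosen independently. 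Distinct $f,g$ differ at some lag $n$, forcing $d_\sA(p_f,p_g)\ge\varepsilon$, so these $\lambda^{\aleph_0}$ types are uniformly separated over a density-$\lambda$ set and $\operatorname{dens}\tS_1(\sA)\ge\lambda^{\aleph_0}>\lambda$.

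The main obstacle throughout is that $d_\emptyset$ (hence $d_\sA$) is the finer, discontinuous type metric, under which $X$ is already non-separable, so naive coordinatewise approximation is useless; the proof must instead exploit that types are coded by $L^\infty$-data depending on only countably many coordinates, which is exactly what caps the count at $\lambda^{\aleph_0}$ rather than $2^{\operatorname{dens}\sA}$. Two points will require care: verifying that restriction to the separable support $\sA_J$ is genuinely isometric (so that the per-$J$ contribution really is only $2^{\aleph_0}$), and, in the lower bound, checking that an independent per-lag prescription of the conditional invariants is simultaneously realisable by an actual event—this amounts to building a single $\sA$-relatively independent orbit with prescribed conditional lag-distances, which I would carry out factor by factor over the $\sC_n$.
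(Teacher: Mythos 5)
Your reduction to types over a $\tau$-invariant (or fixed) subalgebra and your observation that $p\mapsto\bP[x\triangle\tau^n x\mid\sA]$ is $2$-Lipschitz from $(\tS_1(\sA),d_\sA)$ into $L^1(\widetilde\sA)$ are both fine. The fatal gap is in the lower-bound construction: you need, for each $f\in\prod_n\lambda$, a \emph{single} event $x$ whose conditional lag-$n$ distances simultaneously equal prescribed values $v_{n,f(n)}$, ``the factors chosen independently''. But the lag-distances of one event are not free parameters: disintegrating over $\sA\subseteq\cU^\tau$, almost every fibre carries a $\tau$-invariant measure $\mu_\omega$, and in each fibre the sequence $c_n=\mu_\omega(x\cap\tau^n x)-\mu_\omega(x)^2$ is positive definite (it equals $\langle U^n g,g\rangle$ for $g=1_x-\mu_\omega(x)$, i.e.\ the Fourier coefficients of a positive spectral measure of mass at most $\frac14$). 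This couples \emph{all} lags at once. Concretely, no event in any system, whatever $\mu(x)$ is, satisfies $\mu(x\triangle\tau^n x)=\frac12-4\varepsilon$ for all odd $n$ and $=\frac12$ for all even $n>0$: testing the matrix $(c_{j-k})_{1\le j,k\le N}$ against the alternating vector $z_j=(-1)^j$ gives roughly $\frac{N}{4}-\varepsilon N^2<0$ once $N>1/(4\varepsilon)$. Now, $2\varepsilon$-separation of $\lambda$ many functions in $L^1(\sC_n)$ forces the family at each lag to take at least two values with gap comparable to $\varepsilon$ on non-null sets; since the $\sC_n$ are independent, for any fixed $f$ the fibrewise prescribed sequence $(v_{n,f(n)}(\omega_n))_n$ realises every finite pattern of those values on a positive-measure set of fibres, in particular the odd/even pattern above; so already each single prescription $p_f$ is inconsistent, let alone all $\lambda^{\aleph_0}$ of them. (A Khintchine-inequality computation applied to the difference of the two spectral measures, whose total variation is at most $\frac12$, rules out more generally \emph{any} choice of two values per lag with uniformly positive gaps such that all combinations are realisable.) Your proposed remedy of working ``factor by factor over the $\sC_n$'' cannot repair this, because the obstruction is not across the parameter factors but across the lags of a single orbit.

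This is precisely the trap the paper's proof avoids: its $\lambda^{\aleph_0}$ prescriptions are made not at different lags but over \emph{disjoint fixed events}, where no compatibility constraint arises. The paper fixes a tree $\{\pi_s\}_{s\in 2^{<\omega}}$ of partial types over $\emptyset$ with $d(\pi_{s\concat 0},\pi_{s\concat 1})\ge\frac13$ (\fref{prp:TypeTree}, a metric Cantor--Bendixson argument whose input is non-smallness, witnessed by the rotation types), takes independent $\tau$-fixed events $a_i$, $i<\lambda$, and requires $x$ to satisfy $\pi_s$ over $b_{\theta,s}=\bigwedge_{i<|s|}a_{\theta(i)}^{s(i)}$. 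Consistency is then trivial: over a partition into fixed events one may glue arbitrary conditional behaviours (a direct sum of systems), so prescriptions over disjoint cells never interact, unlike prescriptions at different lags of one event. Separation of branches follows by integrating the fibrewise distance via the lemma $d_\sA(p,q)\ge\int d_\emptyset\circ(\eta_p,\eta_q)\,d\tilde\mu$: at the least $i$ with $\theta(i)\ne\theta'(i)$, over $a_{\theta(i)}\triangle a_{\theta'(i)}$ the two conditional types lie fibrewise in sibling sets $[\pi_{u\concat 0}]$, $[\pi_{u\concat 1}]$, giving $d(r_\theta,r_{\theta'})\ge\frac16$. Incidentally, your converse direction is sound in outline but much more laborious than needed (the paper gets it in one line from stability in a countable language), and your claim that restriction to a countable support $\sA_J$ is isometric silently uses stationarity of types over invariant subalgebras, which would itself require proof. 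To salvage your plan you would have to convert the per-lag prescriptions into per-cell prescriptions over refining partitions by fixed events---at which point you have reconstructed the paper's argument.
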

\begin{proof}
  One direction is since $APr_A$ is stable in a countable language.

  For the other, assume $\lambda^{\aleph_0} > \lambda$.
  Let $\{\pi_s\colon s \in 2^{<\omega}\}$ be as in \fref{prp:TypeTree}.
  Let $\{a_i\colon i < \lambda\}$ be a sequence of independent events of
  measure $\frac{1}{2}$, all fixed by $\tau$, and let $\sA$ be
  the generated complete algebra.

  For $\theta \in \lambda^\bN$ and $s \in 2^{<\omega}$ let
  $b_{\theta,s} = \bigwedge_{i<|s|} a_{\theta(i)}^{s(i)} \in \sA$,
  and let
  \begin{gather*}
    \rho_{\theta,n} = \bigcup_{s\in 2^n} \pi_s^{b_{\theta,s}}(x),
    \qquad
    \rho_\theta = \bigcup_n \rho_{\theta,n}.
  \end{gather*}
  In other words,
  $\rho_\theta(c)$ holds if and only if, for every
  $s \in 2^{<\omega}$,
  $c$ satisfies $\pi_s$ over
  $\bigwedge_{i<|s|} a_{\theta(i)}^{s(i)}$.
  It is easy to check that $\rho_{\theta,n}$ is consistent and implies
  $\rho_{\theta,m}$ for $m < n$,
  so $\rho_\theta$ is consistent as well.
  Choose for each $\theta$ a complete type $r_\theta \in \tS_1(\sA)$ extending
  $\rho_\theta$.

  Let $\theta \neq \theta' \in \lambda^\bN$, and let $i \in \bN$ be
  such that $\theta(i) \neq \theta'(i)$.
  Then over
  $a_{\theta(i)} \setminus a_{\theta'(i)}$, $\eta_{r_\theta}$
  takes only values in
  $\bigcup_{s \in 2^i} [\pi_{s\concat 0}] \subseteq \tS_1(APr_A)$,
  while $\eta_{r_{\theta'}}$ only takes values in
  $\bigcup_{s \in 2^i} [\pi_{s\concat 1}]$,
  and the opposite holds over
  $a_{\theta'(i)} \setminus a_{\theta(i)}$.
  Thus
  $d_\emptyset\circ(\eta_{r_\theta},\eta_{r_{\theta'}})
  \geq \frac{1}{3}$
  over
  $a_{\theta(i)}\triangle a_{\theta'(i)}$, which has measure $\half$.
  We conclude that $d(r_\theta,r_{\theta'}) \geq \frac{1}{6}$.

  We have shown that there are $\lambda^\bN$ equally distanced types
  over a set of $\lambda$ parameters, as desired.
\end{proof}

We conclude:
\begin{thm}
  The theory $APr_A$ is not superstable, and therefore not supersimple.
\end{thm}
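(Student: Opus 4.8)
The plan is to read the result directly off the stability spectrum of $APr_A$, which the immediately preceding lemma computes exactly. Recall that in the metric framework a stable theory is \emph{superstable} precisely when it is $\lambda$-stable for all sufficiently large $\lambda$; this is the metric analogue of the classical stability spectrum theorem. Granting this equivalence, the theorem reduces to a purely arithmetic observation, so most of the genuine work has already been done in \fref{prp:TypeTree}, the non-smallness lemma, and the $\lambda$-stability computation built on them.

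First I would invoke the preceding lemma, which states that $APr_A$ is $\lambda$-stable if and only if $\lambda^{\aleph_0} = \lambda$. To contradict superstability it then suffices to exhibit arbitrarily large cardinals $\lambda$ for which $\lambda^{\aleph_0} > \lambda$. By K\"onig's theorem one has $\lambda^{\mathrm{cf}(\lambda)} > \lambda$ for every infinite $\lambda$, so in particular $\lambda^{\aleph_0} > \lambda$ whenever $\mathrm{cf}(\lambda) = \aleph_0$. Cardinals of countable cofinality occur cofinally among the cardinals (for instance $\aleph_\omega, \aleph_{\omega \cdot 2}, \ldots$), so $APr_A$ fails to be $\lambda$-stable for cofinally many $\lambda$. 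Hence it cannot be $\lambda$-stable for all large $\lambda$, and is therefore not superstable.

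For the second assertion I would use that superstability is equivalent to the conjunction of stability and supersimplicity. The theory $APr_A$ \emph{is} stable: the same lemma shows it is $\lambda$-stable for every $\lambda$ with $\lambda^{\aleph_0} = \lambda$, and such cardinals also occur cofinally, so the theory is stable. Consequently, were $APr_A$ supersimple it would be superstable, contradicting the first part; thus $APr_A$ is not supersimple either.

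The final step is essentially bookkeeping, combining the $\lambda$-stability computation with K\"onig's theorem. I expect the only genuine subtlety to be the invocation of the stability-spectrum characterisation of superstability in the metric setting: one should confirm that ``$\lambda$-stable for all large $\lambda$'' really coincides with the ordinal $\varepsilon$-rank definition of superstability used in this paper, rather than merely with the classical first-order notion, since it is the latter that makes the reduction to cardinal arithmetic legitimate.
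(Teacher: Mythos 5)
Your proposal is correct and is essentially the argument the paper intends: the theorem is stated as an immediate consequence ("We conclude") of the lemma computing the stability spectrum, with the same implicit reliance on the spectrum characterisation of superstability (failing at cardinals of countable cofinality via K\"onig) and on superstable $=$ stable $+$ supersimple for the second clause. Your final caveat about verifying this characterisation against the paper's $\varepsilon$-rank definition of supersimplicity is the only point the paper also leaves unaddressed, so nothing is missing relative to the source.
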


Notice the difference from the case of $IHS_A$,
which is superstable (but not $\aleph_0$-stable).

\providecommand{\bysame}{\leavevmode\hbox to3em{\hrulefill}\thinspace}
\providecommand{\MR}{\relax\ifhmode\unskip\space\fi MR }
\providecommand{\MRhref}[2]{%
  \href{http://www.ams.org/mathscinet-getitem?mr=#1}{#2}
}
\providecommand{\href}[2]{#2}

\end{document}